\documentclass[12pt]{amsart}
\oddsidemargin 3ex
\evensidemargin 3ex
\textheight 7.9in
\textwidth 6.0in


\usepackage{amsmath}
\usepackage{amssymb}







\newtheorem{theorem}{Theorem}[section]
\newtheorem{lemma}[theorem]{Lemma}
\newtheorem{proposition}[theorem]{Proposition}

 \theoremstyle{definition}
\newtheorem{definition}[theorem]{Definition}
\newtheorem{assumption}[theorem]{Assumption}

\theoremstyle{remark}
\newtheorem{remark}[theorem]{Remark}

\numberwithin{equation}{section}



\begin{document}

\title{The $L_p$-dual Christoffel-Minkowski problem for the case $p\geq q$}

\author{Xiaojuan Chen}
\address{Faculty of Mathematics and Statistics, Hubei Key Laboratory of Applied Mathematics, Hubei University,  Wuhan 430062, P.R. China}
\email{201911110410741@stu.hubu.edu.cn}

\author{Qiang Tu$^{\ast}$}
\address{Faculty of Mathematics and Statistics, Hubei Key Laboratory of Applied Mathematics, Hubei University,  Wuhan 430062, P.R. China}
\email{qiangtu@hubu.edu.cn}

\author{Ni Xiang}
\address{Faculty of Mathematics and Statistics, Hubei Key Laboratory of Applied Mathematics, Hubei University,  Wuhan 430062, P.R. China}
\email{nixiang@hubu.edu.cn}

\subjclass[2010]{Primary 35J15; Secondary 35B45.}
\thanks{This research was supported by funds from Hubei Provincial Department of Education
Key Projects D20181003 and the National Natural Science Foundation of China No. 11971157, 12101206.}
\thanks{$\ast$ Corresponding author}


\date{}

\begin{abstract}
In this paper, we consider a class of Hessian equations associated to the $L_p$-dual Christoffel-Minkowski problem for the case $p\geq q$. By combining the tools of constant rank theorem, the a priori estimates and the continuity method, we obtain the existence and uniqueness for strictly spherical convex solutions to the $L_p$-dual Christoffel-Minkowski problem.
\end{abstract}

\keywords{The $L_p$-dual Christoffel-Minkowski problem; The existence and uniqueness; Constant rank theorem.}

\makeatletter
\@namedef{subjclassname@2020}{\textup{2020} Mathematics Subject Classification}
\makeatother
\subjclass[2020]
{35J60, 35B45, 52A39.}

\maketitle
\vskip4ex

\section{Introduction}

The classical Brunn-Minkowski theory is the classical core of the geometry
of convex bodies. The Minkowski sum, the mixed volumes, curvature and area measures are fundamental concepts. The introduction of dual curvature measures and their variational formulas by Huang, Lutwak, Yang, and Zhang \cite{HLYZ16} has significantly expanded the classical theory, leading to the development of the dual Brunn-Minkowski theory.

This paper concerns the $L_p$-dual Christoffel-Minkowski problem, which is an extension of the classical Minkowski problem. It involves finding a convex body whose dual curvature measures match a given measure, under the $L_p$ norm.
When the given measure has a density,  the specific equation can be  reduced to the following Hessian type equation
 \begin{equation}\label{G-eq}
\sigma_k(u_{ij}+u\delta_{ij})=u^{p-1}(u^2+|\nabla u|^2)^{\frac{k+1-q}{2}}\varphi(x),\quad on~ \mathbb{S}^{n},
\end{equation}
where $\sigma_k$ is the $k$-th elementary symmetric polynomial, $u_{ij}$ is the second order covariant derivative of $u$ with respect to orthonormal frames on $\mathbb{S}^n$, $\delta_{ij}$ is the standard Kronecker
symbol and $\varphi$ is a positive smooth function on $\mathbb{S}^{n}$.

When $p=1,q=k+1$, equation \eqref{G-eq} corresponds to the classical Christoffel-Minkowski problem
\begin{equation}\label{sol}
\sigma_k(u_{ij}+u\delta_{ij})=f(x), \quad on~ \mathbb{S}^{n},
\end{equation}
which has attracted much attentions. In the case $k=1$, equation \eqref{sol} is the classical Christoffel problem, the early treatments were given in Christoffel \cite{Ch65} and others, the final solutions were obtained in Firey \cite{Fi67,Fi68} and Berg \cite{Be69}. In the case $k=n$, equation \eqref{sol} corresponds to the classical Minkowski problem, which has been settled by the works of Minkowski \cite{M87}, Alexandrov \cite{Al37}, Lewy \cite{LH38}, Nirenberg \cite{NL53}, Cheng-Yau \cite{CY76} and Pogorelov \cite{PA78}. In the intermediate case $1<k<n$, equation \eqref{sol} is precisely the celebrated Christoffel-Minkowski problem, which has been widely investigated in
 Guan-Ma \cite{GM03}, Guan-Lin-Ma \cite{GM06} and Guan-Ma-Zhou \cite{GM10}.



The Christoffel-Minkowski problem related to $p$-sums, which can be called the $L_p$-Christoffel-Minkowski problem
\begin{equation}\label{mat}
  \sigma_k(u_{ij}+u\delta_{ij})=u^{p-1}\varphi(x),\quad on~ \mathbb{S}^{n}.
\end{equation}
For $k=n$, equation \eqref{mat} corresponds to the $L_p$-Minkowski problem, which was introduced by Lutwak \cite{LE93} and then has been extensively studied.
One can refer to the logarithmic Minkowski problem in \cite{BLYZ13, CLZ19, Z14}, the centroaffine Minkowski problem in \cite{JLW18,JLZ16,L18,L19,LW13,Z15}, the others in \cite{BBC19,HLYZ10,HH12,JL19}. For the general $k$, we refer the readers to Hu-Ma-Shen \cite{HMS04} for $p\geq k+1$, and Guan-Xia \cite{GX18} for $1<p<k+1$, respectively. One can consult  \cite{IV19, SY20}  for more works.

The $L_p$-dual Christoffel-Minkowski problem \eqref{G-eq} contains all the aforementioned Christoffel-Minkowski problems,
however, as far as we know, there is relatively few research for the general $k$ in equation \eqref{G-eq}.
Recently, Li-Ju-Liu \cite{LJL22} and Ding-Li \cite{DL22} have  obtained the existence and uniqueness for solutions of the $L_p$-dual Christoffel-Minkowski problem by the flow methods, respectively.




A solution $u$ of equation \eqref{G-eq} is called admissible if $(u_{ij}+u\delta_{ij})\in \Gamma_k$ and $u$ is (strictly) spherical convex if $(u_{ij}+u\delta_{ij})\geq 0(>0)$.
 To address the existence of convex bodies for the $ L_p $-dual Christoffel-Minkowski problem, especially when the given measure has a density, we need to focus on the solvability of strictly spherical convex solutions for the relevant equation \eqref{G-eq}.
In order to obtain the solvability and ensure the maintenance of convexity,  constant rank theorem plays a crucial role.
Before elaborating the relevant conclusions, we propose the following assumptions.
\begin{assumption}\label{cdt-091}
Let $\varphi(x)$ be a positive smooth function satisfying one of the following conditions:

(1) if $p\geq1, q\leq k+1$, $\left(\varphi^{-\frac{1}{k+p-1}}\right)_{ii}+\varphi^{-\frac{1}{k+p-1}}\geq0$;


(2) if $p\geq 1, k+1<q<2k+p$, $\left(\varphi^{-\frac{1}{k+p-1}}\right)_{ii}+\frac{2k+p-q}{k+p-1}\varphi^{-\frac{1}{k+p-1}}\geq0$.
\end{assumption}
Then the main theorem is as follows.
\begin{theorem}\label{exist-main}
Let $1\leq k \leq n$ and $\varphi$ be a positive smooth function satisfying Assumption \ref{cdt-091}.

(1) If $p>q$, then there exists a unique positive strictly spherical convex solution $u$ of equation \eqref{G-eq}.

(2) If $p=q>1$, then there exists a unique positive constant $\gamma$ such that
\begin{equation}\label{Keq}
  \sigma_k(u_{ij}+u\delta_{ij})=u^{p-1}(u^2+|\nabla u|^2)^{\frac{k+1-q}{2}}\gamma\varphi(x),\quad on~ \mathbb{S}^{n}
\end{equation}
has a unique positive strictly spherical convex solution $u$ up to a dilation.
\end{theorem}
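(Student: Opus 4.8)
The plan is to treat both cases by the method of continuity, deforming the weight $\varphi$ to a constant. Write \eqref{G-eq} as $G[u]:=\sigma_k(u_{ij}+u\delta_{ij})-u^{p-1}(u^2+|\nabla u|^2)^{\frac{k+1-q}{2}}\varphi=0$, put $\psi:=\varphi^{-\frac{1}{k+p-1}}$, fix a positive constant $\psi_0$, and set $\psi_t:=(1-t)\psi_0+t\psi$, $\varphi_t:=\psi_t^{-(k+p-1)}$. Since the inequalities in Assumption~\ref{cdt-091} are linear in $\psi$ with positive zeroth order coefficient ($1$, resp. $\frac{2k+p-q}{k+p-1}>0$ since $q<2k+p$), they are satisfied by every positive constant, hence $\varphi_t$ satisfies Assumption~\ref{cdt-091} for all $t\in[0,1]$. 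In case (1) the unknown is $u$, and at $t=0$ the constant $u\equiv(\varphi_0/\binom{n}{k})^{\frac{1}{p-q}}$ is an explicit positive strictly spherical convex solution. In case (2), since \eqref{Keq} is invariant under $u\mapsto\lambda u$ when $p=q$, I would instead study \eqref{Keq} under the side constraint $\max_{\mathbb{S}^n}u=1$ with $\gamma>0$ an additional unknown; at $t=0$ one has $u\equiv 1,\ \gamma=\binom{n}{k}/\varphi_0$. Let $I\subseteq[0,1]$ be the set of parameters for which the deformed problem admits a positive strictly spherical convex solution; the goal is $I=[0,1]$, which yields existence at $t=1$.

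Openness comes from the implicit function theorem. The linearization at a strictly spherical convex solution, $Lv=\sigma_k^{ij}(W)(v_{ij}+v\delta_{ij})+(\text{first order})+(\text{zeroth order})$ with $W=(u_{ij}+u\delta_{ij})$, is uniformly elliptic because $W\in\Gamma_k$. One must check that $L$ (in case~(1)), resp. $L$ together with the $\gamma$-derivative and the normalization (in case~(2)), is an isomorphism of the relevant Hölder spaces, i.e. has trivial kernel; this is where $p>q$ is used in case~(1), while in case~(2) the one-dimensional kernel $\mathbb{R}\,u$ produced by the scaling symmetry is precisely compensated by varying $\gamma$ under the constraint $\max u=1$.

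Closedness is the analytic core and rests on a priori estimates for strictly spherical convex solutions, uniform along the deformation. For $C^0$: evaluating the equation at a maximum and at a minimum point of $u$ (where $\nabla u=0$, and $W\le(\max u)\,\mathrm{Id}$, resp. $W\ge(\min u)\,\mathrm{Id}$) gives $(\max u)^{p-q}\varphi_t\le\binom{n}{k}$ and $(\min u)^{p-q}\varphi_t\ge\binom{n}{k}$ at those points; for $p>q$ this produces uniform positive two-sided bounds on $u$, whereas for $p=q$ the same computation instead confines $\gamma$ to a fixed compact subinterval of $(0,\infty)$, the upper bound on $u$ is the normalization, and the positive lower bound on $u$ must be obtained separately --- this is the delicate point of case~(2). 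For $C^1$: the $C^0$ bounds mean $u$ is the support function of a convex body sandwiched between balls of fixed radii, so $|\nabla u|^2=|y|^2-u^2$ is bounded. For $C^2$: an upper bound for $\lambda_{\max}(W)$ follows from the maximum principle applied to an auxiliary function of the type $\log\lambda_{\max}(W)+\Phi(|\nabla u|^2)+\Psi(u)$, using the concavity of $\sigma_k^{1/k}$ on $\Gamma_k$ and the controlled right-hand side. A positive lower bound for $\lambda_{\min}(W)$ does \emph{not} follow from bounds on $\lambda_{\max}(W)$ and a positive lower bound on $\sigma_k(W)$ when $k<n$, and this is exactly where the constant rank theorem enters: since Assumption~\ref{cdt-091} is preserved along the deformation, any spherical convex ($W\ge 0$) solution has $W$ of constant rank, which together with the above estimates forces the limiting solution to satisfy $W>0$ on all of $\mathbb{S}^n$, hence $W\ge c\,\mathrm{Id}$ by compactness. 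Evans--Krylov and Schauder estimates then give uniform $C^\infty$ bounds, so $I$ is closed; thus $I=[0,1]$.

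Uniqueness is proved by comparison. Let $u_1,u_2$ be positive strictly spherical convex solutions and $c:=\max_{\mathbb{S}^n}(u_1/u_2)$, attained at $x_0$. At $x_0$ one has $u_1=cu_2$, $\nabla u_1=c\nabla u_2$ and $(u_1)_{ij}\le c(u_2)_{ij}$, hence $W[u_1]\le cW[u_2]$ and $\sigma_k(W[u_1])(x_0)\le c^k\sigma_k(W[u_2])(x_0)$; substituting the equation and using $u_1^2+|\nabla u_1|^2=c^2(u_2^2+|\nabla u_2|^2)$ at $x_0$ yields $c^{p-q}\le 1$. When $p>q$ this forces $c\le 1$, i.e. $u_1\le u_2$, and interchanging $u_1,u_2$ gives $u_1=u_2$. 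When $p=q$, the same comparison applied to two solutions $(\gamma_1,u_1),(\gamma_2,u_2)$ of \eqref{Keq} gives $\gamma_1\le\gamma_2$ and, symmetrically, $\gamma_1=\gamma_2$, so $\gamma$ is unique; then the inequality above becomes the equality $\sigma_k(W[u_1])(x_0)=\sigma_k(cW[u_2])(x_0)$ with $W[u_1](x_0)\le cW[u_2](x_0)$, and strict monotonicity of $\sigma_k$ on the positive cone forces $W[u_1](x_0)=cW[u_2](x_0)$. Since \eqref{Keq} with $p=q$ is scale invariant, $cu_2$ is again a solution, so $v:=u_1-cu_2$ satisfies a linear elliptic equation $a^{ij}v_{ij}+b^iv_i+\tilde c\,v=0$; it is $\le 0$ and vanishes to second order at the interior maximum $x_0$, so the strong maximum principle gives $v\equiv 0$, i.e. $u_1$ and $u_2$ agree up to a dilation. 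The principal obstacle throughout is the $C^2$ estimate combined with the invocation of the constant rank theorem (and, when $p=q$, the positive lower bound for $u$); imposing Assumption~\ref{cdt-091} is precisely what makes the constant rank theorem available.
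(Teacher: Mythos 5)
For case (1) your scheme coincides with the paper's: the same deformation path (linear interpolation of $\varphi^{-\frac{1}{k+p-1}}$), the same kernel argument for openness using $p>q$, the same $C^0$/$C^1$ estimates at extremum points, a Chu-type test function for the $C^2$ bound, Theorem \ref{crt} to keep $W>0$ in the limit, and the same quotient comparison for uniqueness (minor slip: the $t=0$ solution is $u\equiv(C_n^k/\varphi_0)^{\frac{1}{p-q}}$, not $(\varphi_0/C_n^k)^{\frac{1}{p-q}}$). Your uniqueness arguments in case (2) (uniqueness of $\gamma$ via the quotient at its extrema, and uniqueness of $u$ up to dilation via a linearized equation for the difference and the strong maximum principle) also match the paper's Steps 2--3.

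The genuine gap is in the existence part of case (2). You propose a continuity method in the pair $(u,\gamma)$ under the normalization $\max_{\mathbb{S}^n}u=1$, and you yourself flag the two points on which it hinges without supplying them. First, closedness requires a positive lower bound for $u$ under the normalization, i.e.\ a Harnack-type estimate $\max_{\mathbb{S}^n}u\leq C\min_{\mathbb{S}^n}u$; without it your $C^1$ argument (body sandwiched between balls) and the $C^2$ estimate of Theorem \ref{c2-main} have no input, so the scheme is incomplete exactly where the difficulty of $p=q$ lies. This is not a routine point: in the paper it is the gradient estimate $|\nabla u|/u\leq C$ of Theorem \ref{c1}, proved by setting $v=\log u$, applying the maximum principle to $|\nabla v|^2$ and using Newton--MacLaurin, and it is precisely here that $p=q>1$ is used (the resulting inequality forces $|\nabla v|$ to be bounded only because the exponent $\frac{p-1}{k}$ is positive). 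Second, openness of your bordered problem requires that the kernel of the linearization be exactly $\mathbb{R}u$ and that varying $\gamma$ compensates it (an isomorphism/cokernel statement); you assert this but do not prove it. The paper avoids both issues by a different route: it solves the perturbed equation \eqref{e1} with exponent $p-1+\varepsilon$ by case (1) (since $p+\varepsilon>q$), normalizes $\overline{u}_\varepsilon=u_\varepsilon/\min u_\varepsilon$, establishes the $C^0$, $C^1$, $C^2$ bounds \emph{uniformly in} $\varepsilon$ (this is where Theorem \ref{c1} enters), and passes to the limit $\varepsilon\to0^+$, with $(\min u_\varepsilon)^{\varepsilon}\to\gamma$ producing the eigenvalue-type constant. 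So your case (2) outline is a plausible alternative, but as written it omits the key a priori estimate (and the openness verification) that the paper's argument is built to deliver.
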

\begin{remark}
Although the condition $p\geq q$ is  not necessary for constant rank theorem, it is essential to derive $C^0, C^1$ estimates, the existence and uniqueness.
\end{remark}
\begin{remark}
  Theorem \ref{exist-main} contains the results of the classical $L_p$-Christoffel-Minkowski problem. Specifically, when $q=k+1$, equation \eqref{G-eq} becomes   the $L_p$-Christoffel-Minkowski problem equation \eqref{mat}. The condition $\left(\varphi^{-\frac{1}{k+p-1}}\right)_{ii}+\varphi^{-\frac{1}{k+p-1}}\geq0$ in Assumption \ref{cdt-091} is sufficient for the existence of equation \eqref{mat} when $p\geq k+1$.
  \end{remark}

\begin{remark}
  For the general $L_p$-Christoffel-Minkowski problem
  $$\sigma_k(u_{ij}+u\delta_{ij})=\varphi(x)g(u),$$
  the existence still holds with the following assumptions:
  $$(\log \varphi)_{ii}\leq 0,\quad gg_{zz}\leq g_z^2,\quad g_zz\geq (p-1)g, \quad \forall ~p>k+1,$$
  $$\lim_{z\rightarrow 0^+} \frac{g(z)}{z^k}=0,\quad \lim_{z\rightarrow + \infty} \frac{g(z)}{z^k}=+\infty.$$
  Some nonhomogeneous cases are also included in our discussion. For instance,
  $$g(z)=z^{p-1}\ln(1+z^s),\  \ \  s>0;$$ $$g(z)=z^{p-1}e^{\sum_ic_iz^{\gamma_i}};$$  $$g(z)=z^{p-1}(\sum_ic_iz^{\gamma_i}+c),  \gamma_i\in(0,1], \ c_i,\ c>0.$$
 \end{remark}

The rest of the paper is organized as follows. In Section 2, we start
with some preliminaries. In  Section 3, we prove constant rank theorem for equation \eqref{G-eq} in maintaining the convexity of solutions. The a priori estimates, the existence and uniqueness in Theorem \ref{exist-main} in the case $p>q$ and $p=q>1$ are established in Section 4 and Section 5, respectively.

\section{Preliminaries}

\subsection{Basic properties of convex hypersurface}
Let $\mathcal{M}$ be a smooth, closed, uniformly convex hypersurface in $\mathbb{R}^{n+1}$. Assume that $\mathcal{M}$ is parametrized by the inverse Gauss map
$$X:\mathbb{S}^n\rightarrow \mathcal{M}.$$
The support function $u:\mathbb{S}^n\rightarrow\mathbb{R}$ of $\mathcal{M}$ is defined by
\begin{equation*}
  u(x)=\sup\{\langle x,y\rangle:y\in\mathcal{M}\}.
\end{equation*}
The supremum is attained at a point $y$ such that $x$ is the outer normal of $\mathcal{M}$ at $X$. It is easy to check that
$$X=u(x)x+\nabla u(x),$$
where $\nabla$ is the covariant derivative with respect to the standard metric $\sigma_{ij}$ of the sphere $\mathbb{S}^n$. Hence
\begin{equation}\label{rho}
\rho=|X|=\sqrt{u^2+|\nabla u|^2}.
\end{equation}
The second fundamental form of $\mathcal{M}$ is given by
\begin{equation}\label{hij}
  h_{ij}=u_{ij}+u\sigma_{ij},
\end{equation}
where $u_{ij}=\nabla_{ij}u$ denotes the second order covariant derivative of $u$ with respect to the spherical metric $\sigma_{ij}$. By Weingarten formula
\begin{equation}\label{sij}
  \sigma_{ij}=\langle\nabla_ix, \nabla_jx\rangle=h_{ik}g^{kl}h_{jl},
\end{equation}
where $g_{ij}$ is the metric of $\mathcal{M}$ and $g^{ij}$ is its inverse. It follows from \eqref{hij} and \eqref{sij} that the principle radii of curvature of $\mathcal{M}$, under a smooth local orthonormal frame on $\mathbb{S}^n$, are the eigenvalues of the matrix
$$b_{ij}=u_{ij}+u\delta_{ij}.$$
In particular, the Gauss curvature is given by
$$K=\frac{1}{\det(u_{ij}+u\delta_{ij})}.$$

\subsection{$k$-th elementary symmetric functions}
Let $\lambda=(\lambda_1,\cdots,\lambda_n)\in\mathbb{R}^n$, then we recall
 definitions of elementary symmetric function for $1\leq k\leq n$
\begin{equation*}
\sigma_k(\lambda)= \sum _{1 \le i_1 < i_2 <\cdots<i_k\leq
n}\lambda_{i_1}\lambda_{i_2}\cdots\lambda_{i_k}.
\end{equation*}

\begin{definition}
Let $1\leq k\leq n$ and $\Gamma_k$ be a cone in $\mathbb{R}^n$ determined by
$$\Gamma_k  = \{ \lambda  \in \mathbb{R}^n :\sigma _i (\lambda ) >
0,~\forall~ 1 \le i \le k\}.$$
\end{definition}

Denote $\sigma_{k-1}(\lambda|i)=\frac{\partial
\sigma_k}{\partial \lambda_i}$ and
$\sigma_{k-2}(\lambda|ij)=\frac{\partial^2 \sigma_k}{\partial
\lambda_i\partial \lambda_j}$, then we list some properties of
$\sigma_k$ which will be used later.

\begin{proposition}\label{sigma}
Let $\lambda=(\lambda_1,\cdots,\lambda_n)\in\mathbb{R}^n$ and $1\leq
k\leq n$. Then we have

(1) $\Gamma_1\supset \Gamma_2\supset \cdot\cdot\cdot\supset
\Gamma_n$;

(2) $\sigma_{k-1}(\lambda|i)>0$ for $\lambda \in \Gamma_k$ and
$1\leq i\leq n$;

(3) $\sigma_k(\lambda)=\sigma_k(\lambda|i)
+\lambda_i\sigma_{k-1}(\lambda|i)$ for $1\leq i\leq n$;

(4)
$\sum_{i=1}^{n}\frac{\partial[\frac{\sigma_{k}}{\sigma_{l}}]^{\frac{1}{k-l}}}
{\partial \lambda_i}\geq [\frac{C^k_n}{C^l_n}]^{\frac{1}{k-l}}$ for
$\lambda \in \Gamma_{k}$ and $0\leq l<k$;

(5) $\Big[\frac{\sigma_k}{\sigma_l}\Big]^{\frac{1}{k-l}}$ are
concave in $\Gamma_k$ for $0\leq l<k$;

(6) If $\lambda_1\geq \lambda_2\geq \cdot\cdot\cdot\geq \lambda_n$,
then $\sigma_{k-1}(\lambda|1)\leq \sigma_{k-1}(\lambda|2)\leq
\cdot\cdot\cdot\leq \sigma_{k-1}(\lambda|n)$ for $\lambda \in
\Gamma_k$;

(7)
$\sum_{i=1}^{n}\sigma_{k-1}(\lambda|i)=(n-k+1)\sigma_{k-1}(\lambda)$.
\end{proposition}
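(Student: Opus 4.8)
\emph{Proof strategy.} Parts (1), (3), (7) are purely formal and I would dispose of them first. For (1), the defining inequalities of $\Gamma_{k-1}$ are a subset of those of $\Gamma_k$, so $\Gamma_k\subseteq\Gamma_{k-1}$ and the chain follows. For (3), since $\sigma_k$ is affine in each $\lambda_i$, separating off the monomials containing the factor $\lambda_i$ writes $\sigma_k(\lambda)=\sigma_k(\lambda|i)+\lambda_i\,\frac{\partial\sigma_k}{\partial\lambda_i}(\lambda)$, and $\frac{\partial\sigma_k}{\partial\lambda_i}=\sigma_{k-1}(\lambda|i)$ by definition. For (7), identifying $\sigma_{k-1}(\lambda|i)$ with the sum of those squarefree degree-$(k-1)$ monomials of $\sigma_{k-1}(\lambda)$ not involving $\lambda_i$, each such monomial is counted in $\sum_i\sigma_{k-1}(\lambda|i)$ exactly for the $n-(k-1)$ indices outside its support, which gives $(n-k+1)\sigma_{k-1}(\lambda)$.

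The first substantive statement is (2), which I would prove as a fact about the shape of $\Gamma_k$. The cases $k=1$ ($\sigma_0(\lambda|i)=1$) and $k=n$ (on the positive orthant $\sigma_{n-1}(\lambda|i)=\prod_{j\ne i}\lambda_j>0$) are immediate, so assume $2\le k\le n-1$. The plan has two parts. First, $\Gamma_k$ is connected: it is a cone, and the shift formula $\sigma_m(\lambda+s\mathbf 1)=\sum_{p=0}^{m}\binom{n-m+p}{p}s^{p}\sigma_{m-p}(\lambda)$ (all summands with $m\le k$ nonnegative, the $p=0$ one positive) shows $\lambda+s\mathbf 1\in\Gamma_k$ for every $s\ge0$ when $\lambda\in\Gamma_k$, whence $(1-t)\lambda+t\mathbf 1=(1-t)\big(\lambda+\tfrac{t}{1-t}\mathbf 1\big)\in\Gamma_k$ for $t\in[0,1)$ and $\mathbf 1\in\Gamma_k$, so $\Gamma_k$ is star-shaped about $\mathbf 1$. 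Second, $\sigma_{k-1}(\cdot|i)$ never vanishes on $\Gamma_k$; as it equals $\binom{n-1}{k-1}>0$ at $\mathbf 1$, connectedness then forces it to be positive throughout. For the non-vanishing, suppose $\sigma_{k-1}(\lambda|i)=0$ for some $\lambda\in\Gamma_k$ and set $\mu=(\lambda|i)\in\R^{n-1}$. By (3), $\sigma_k(\mu)=\sigma_k(\lambda)-\lambda_i\sigma_{k-1}(\mu)=\sigma_k(\lambda)>0$, so Newton's inequality $\sigma_{k-1}(\mu)^2\ge c\,\sigma_{k-2}(\mu)\sigma_k(\mu)$ for the real tuple $\mu$ (with $c>0$) forces $\sigma_{k-2}(\mu)\le0$. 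When $k=2$ this contradicts $\sigma_0(\mu)=1$; when $k\ge3$, either $\sigma_{k-2}(\mu)=0$, in which case $\sigma_{k-1}(\lambda)=\sigma_{k-1}(\mu)+\lambda_i\sigma_{k-2}(\mu)=0$ contradicts $\Gamma_k\subseteq\Gamma_{k-1}$, or $\sigma_{k-2}(\mu)<0$, in which case $\sigma_{k-1}(\lambda)=\lambda_i\sigma_{k-2}(\mu)>0$ forces $\lambda_i<0$, and then the identities $\sigma_m(\lambda)=\sigma_m(\mu)+\lambda_i\sigma_{m-1}(\mu)>0$ for $1\le m\le k-2$ propagate ($\sigma_m(\mu)<0\Rightarrow\lambda_i\sigma_{m-1}(\mu)=\sigma_m(\lambda)-\sigma_m(\mu)>0\Rightarrow\sigma_{m-1}(\mu)<0$) down from $\sigma_{k-2}(\mu)<0$ to $\sigma_0(\mu)<0$, which is absurd. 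This is the main obstacle; once it is in place the rest is comparatively soft.

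From (1) and (2) I would extract the coordinate-restriction lemma: if $\lambda\in\Gamma_m(\R^N)$ then $\lambda\in\Gamma_{m'}(\R^N)$ for all $m'\le m$ by (1), so (2) applied with parameter $m'$ gives $\sigma_{m'-1}(\lambda|i)>0$ for all $m'\le m$, i.e.\ $(\lambda|i)\in\Gamma_{m-1}(\R^{N-1})$; applying this twice, $\sigma_{k-2}(\lambda|ab)>0$ for $\lambda\in\Gamma_k$ (interpreting $\sigma_{k-2}=\sigma_0=1$ when $k=2$). Then (6) follows from the commutator identity obtained by expanding $\sigma_{k-1}(\lambda|a)$ and $\sigma_{k-1}(\lambda|b)$ via (3) at the surviving index, namely $\sigma_{k-1}(\lambda|a)-\sigma_{k-1}(\lambda|b)=(\lambda_b-\lambda_a)\sigma_{k-2}(\lambda|ab)$: so $\lambda_a\ge\lambda_b$ forces $\sigma_{k-1}(\lambda|a)\le\sigma_{k-1}(\lambda|b)$, and chaining along $\lambda_1\ge\cdots\ge\lambda_n$ yields (6).

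Part (5) I would quote: the concavity of $(\sigma_k/\sigma_l)^{1/(k-l)}$ on the convex cone $\Gamma_k$ is the classical G\aa{}rding inequality for the hyperbolic polynomial $\sigma_k$ (see, e.g., \cite{GM03}); the case $l=0$ is the concavity of $\sigma_k^{1/k}$, and the general case is deduced from it. Finally (4) is a formal consequence of (5): $F=(\sigma_k/\sigma_l)^{1/(k-l)}$ is smooth, positive and $1$-homogeneous on $\Gamma_k$, so Euler's relation gives $\sum_i\lambda_i\partial_iF(\lambda)=F(\lambda)$, while concavity gives $F(\mu)\le F(\lambda)+\sum_i\partial_iF(\lambda)(\mu_i-\lambda_i)$ for every $\mu\in\Gamma_k$; combining these, $F(\mu)\le\sum_i\partial_iF(\lambda)\,\mu_i$, and taking $\mu=\mathbf 1\in\Gamma_k$ yields $\sum_i\partial_iF(\lambda)\ge F(\mathbf 1)=(C_n^k/C_n^l)^{1/(k-l)}$. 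Thus the whole proposition reduces to elementary algebra and combinatorics (1), (3), (7), the interior positivity (2), its corollaries (6) and the restriction lemma, and the quoted concavity (5), so I expect no difficulty beyond (2) and (5).
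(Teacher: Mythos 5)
Your proposal is correct, but it is worth noting that the paper does not actually prove this proposition at all: its ``proof'' consists entirely of citations (Lieberman Ch.~XV and Huisken--Sinestrari for (1), (2), (3), (6), (7); Gerhardt, Lemma 2.2.19 for (4); Caffarelli--Nirenberg--Spruck and Lieberman for (5)). What you have written is a self-contained reconstruction of those standard facts, and each step checks out: the shift formula $\sigma_m(\lambda+s\mathbf 1)=\sum_{p}\binom{n-m+p}{p}s^p\sigma_{m-p}(\lambda)$ does give star-shapedness of $\Gamma_k$ about $\mathbf 1$; the non-vanishing argument for $\sigma_{k-1}(\cdot|i)$ via Newton's inequality for the real $(n-1)$-tuple $\mu=(\lambda|i)$ and the downward propagation of signs through $\sigma_m(\lambda)=\sigma_m(\mu)+\lambda_i\sigma_{m-1}(\mu)$ is sound (and is essentially the classical proof that $\Gamma_k$ lies in the positivity set of all the $\sigma_{k-1}(\cdot|i)$); the commutator identity $\sigma_{k-1}(\lambda|a)-\sigma_{k-1}(\lambda|b)=(\lambda_b-\lambda_a)\sigma_{k-2}(\lambda|ab)$ combined with the restriction lemma gives (6); and the Euler-relation-plus-concavity derivation of (4) from (5) is exactly the content of Gerhardt's Lemma 2.2.19. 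Two cosmetic remarks: in your $k=n$ base case for (2) you implicitly use that $\Gamma_n$ is the positive orthant, which deserves the one-line justification that $\prod_i(t+\lambda_i)=\sum_i\sigma_i(\lambda)t^{n-i}>0$ for $t\ge 0$ when all $\sigma_i>0$; and like the paper you leave (5) as a quotation, which is reasonable since a genuine proof of G\aa{}rding-type concavity would be a substantial detour. In short, your route is more informative than the paper's, at the cost of length; the paper's citation-only approach is the standard practice for these well-known facts.
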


\begin{proof}
All the properties are well known. For example, see Chapter XV in
\cite{Li96} or \cite{Hui99} for proofs of (1), (2), (3),  (6) and
(7); see Lemma 2.2.19 in \cite{Ger06} for the proof of (4); see
\cite{CNS85} and \cite{Li96} for the proof of (5).
\end{proof}

\begin{proposition}
Let $W=W_{ij}$ be an $n\times n$ symmetric matric, $\lambda(W)=(\lambda_1,\lambda_2,\cdots,\lambda_n)$ be the eigenvalues of the symmetric matric $W$. Suppose that $W=W_{ij}$ is diagonal and $\lambda_i=W_{ii}$, then we have
$$\frac{\partial\lambda_i}{\partial W_{ij}}=\delta_{ij},$$
\begin{eqnarray*}
\frac{\partial^2\lambda_i}{\partial W_{ij}\partial W_{pq}}=
\begin{cases}
\frac{1}{\lambda_i-\lambda_p},~& i=q, j=p, i\neq p,\\
0,~~~& \mbox{otherwise}.
\end{cases}
\end{eqnarray*}
\end{proposition}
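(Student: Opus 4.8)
The plan is to regard each eigenvalue, near the diagonal matrix $W$, as a smooth function of the $n^{2}$ independent entries $W_{ab}$ and to extract the two formulas from its second-order Taylor expansion, which I compute by first-order perturbation theory. Throughout I assume $\lambda_i=W_{ii}$ is a \emph{simple} eigenvalue of $W$; this is the setting in which the statement makes sense, since $(\lambda_i-\lambda_p)^{-1}$ appears on the right. Then $\lambda_i$ is a simple root of the characteristic polynomial, so by the implicit function theorem it extends to a real-analytic function $\mathbf{W}\mapsto\lambda_i(\mathbf{W})$ on a neighbourhood of $W$ in the space of all (not necessarily symmetric) $n\times n$ matrices, together with an analytic right eigenvector $v_i(\mathbf{W})$ satisfying $v_i(W)=e_i$; I normalise it by $\langle v_i(\mathbf{W}),e_i\rangle\equiv1$. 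The derivatives in the statement are the partial derivatives of this extension.

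First I would differentiate the identity $\mathbf{W}v_i=\lambda_i v_i$ once along the coordinate direction $E_{ab}$ (the matrix with a single $1$ in position $(a,b)$), writing $\dot{(\ )}$ for $\frac{d}{dt}\big|_{t=0}$ with $\mathbf{W}(t)=W+tE_{ab}$: at $t=0$ this reads $E_{ab}e_i+W\dot v_i=\dot\lambda_i e_i+\lambda_i\dot v_i$. Pairing with $e_i$ and using $\langle W\dot v_i,e_i\rangle=\langle\dot v_i,We_i\rangle=\lambda_i\langle\dot v_i,e_i\rangle$ (here $W$ is the given symmetric matrix) the $\dot v_i$ terms cancel, leaving $\dot\lambda_i=\langle E_{ab}e_i,e_i\rangle=\delta_{ai}\delta_{bi}$; with $(a,b)=(i,j)$ this is $\partial\lambda_i/\partial W_{ij}=\delta_{ij}$. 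Pairing the same identity instead with $e_l$, $l\neq i$, and using $\langle W\dot v_i,e_l\rangle=\lambda_l\langle\dot v_i,e_l\rangle$ and $\langle\dot v_i,e_i\rangle=0$ (from the normalisation), one obtains the first-order eigenvector correction $\langle\dot v_i,e_l\rangle=\delta_{bi}\delta_{al}/(\lambda_i-\lambda_l)$; in general, along a direction $H$, the same computation gives $\dot v_i=\sum_{l\neq i}(\lambda_i-\lambda_l)^{-1}H_{li}\,e_l$.

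Next I would differentiate $\mathbf{W}v_i=\lambda_i v_i$ a second time along a direction $H$, so that $\ddot{\mathbf{W}}=0$; at $t=0$, pairing with $e_i$ and cancelling the $\dot v_i,\ddot v_i$ contributions exactly as before, this gives $\ddot\lambda_i=2\langle H\dot v_i,e_i\rangle=2\sum_{l\neq i}\frac{H_{il}H_{li}}{\lambda_i-\lambda_l}$ after substituting the first-order correction. Since $\ddot\lambda_i=\sum_{a,b,c,d}\frac{\partial^2\lambda_i}{\partial W_{ab}\partial W_{cd}}H_{ab}H_{cd}$, and for $l\neq i$ the monomial $H_{il}H_{li}$ comes only from the two index patterns $(a,b,c,d)=(i,l,l,i)$ and $(l,i,i,l)$, comparing coefficients yields $\partial^2\lambda_i/\partial W_{il}\partial W_{li}=(\lambda_i-\lambda_l)^{-1}$ for $l\neq i$ and $\partial^2\lambda_i/\partial W_{ab}\partial W_{cd}=0$ otherwise. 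Writing the first slot as $W_{ij}$ and the second as $W_{pq}$, the nonzero case is precisely $i=q$, $j=p$, $i\neq p$, with value $(\lambda_i-\lambda_p)^{-1}$ --- which is the assertion. (Alternatively one can avoid the curves and work with the spectral projection $P_i(\mathbf{W})=\frac{1}{2\pi\mathrm{i}}\oint(zI-\mathbf{W})^{-1}\,dz$ and $\lambda_i=\tr(\mathbf{W}P_i(\mathbf{W}))$, differentiating the resolvent; the bookkeeping is the same.)

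The one place that needs care --- and where a plausible off-by-a-factor-of-$2$ slip lurks --- is the convention that $W_{ij}$ and $W_{ji}$ are \emph{independent} variables: that is precisely why $\partial^2\lambda_i/\partial W_{ij}\partial W_{pq}=(\lambda_i-\lambda_p)^{-1}$ and not $2(\lambda_i-\lambda_p)^{-1}$, the latter being what one gets on differentiating twice in the single symmetric direction $E_{ij}+E_{ji}$. One must also keep the simplicity of $\lambda_i$ in force throughout, so that the perturbation expansion is non-degenerate and $(\lambda_i-\lambda_l)^{-1}$ is meaningful.
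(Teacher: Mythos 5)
Your proposal is correct. The paper states this proposition without proof, as a standard fact about first and second variations of eigenvalues, so there is no paper argument to compare against; your first/second-order perturbation computation (differentiate $\mathbf{W}v_i=\lambda_i v_i$ along $W+tE_{ab}$, pair with $e_i$ to kill the eigenvector corrections, then feed the first-order eigenvector correction back in at second order) is the standard derivation and it checks out: $\dot\lambda_i=H_{ii}$ and $\ddot\lambda_i=2\sum_{l\neq i}H_{il}H_{li}/(\lambda_i-\lambda_l)$, and matching the symmetric Hessian coefficients of the quadratic form indeed gives $\partial^2\lambda_i/\partial W_{il}\partial W_{li}=(\lambda_i-\lambda_l)^{-1}$ with all other second partials vanishing. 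You also correctly flag the two genuinely delicate points: treating $W_{ij}$ and $W_{ji}$ as independent variables (which is exactly the convention under which the stated formula, without a factor $2$, reproduces the usual $\sum_{p\neq i}2B_{ip}^2/(\lambda_i-\lambda_p)$ when contracted with a symmetric direction $B$), and the simplicity of $\lambda_i$, which is implicitly assumed whenever $(\lambda_i-\lambda_p)^{-1}$ is written and is how the formula is used later in the paper (e.g.\ after the perturbation making $\widetilde\lambda_1>\widetilde\lambda_2$ in the $C^2$ estimate).
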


\begin{proposition}
Suppose $W=W_{ij}$ is diagonal and $m(1\leq m\leq n)$ is a positive integer, then
\begin{eqnarray*}
\frac{\partial \sigma_m(W)}{\partial W_{ij}}=
  \begin{cases}
  \sigma_{m-1}(W|i), ~&i=j,\\
  0, ~~~&otherwise,
  \end{cases}
\end{eqnarray*}
\begin{eqnarray*}
\frac{\partial^2\sigma_m(W)}{\partial W_{ij}\partial W_{pq}}=
\begin{cases}
\sigma_{m-2}(W|ip), ~~~&i=j,p=q,i\neq p,\\
-\sigma_{m-2}(W|ip), ~~~&i=q,j=p,i\neq j,\\
0,~~~&\mbox{otherwise}.
\end{cases}
\end{eqnarray*}
\end{proposition}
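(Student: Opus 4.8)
The plan is to route everything through the generating identity for the elementary symmetric functions of the eigenvalues. Regarding $\sigma_m(W)$ as a polynomial in the independent entries $W_{ij}$, one has
\[
\det(W+tI)=\sum_{m=0}^{n}\sigma_m(W)\,t^{\,n-m},
\]
because the coefficient of $t^{\,n-m}$ in $\det(W+tI)$ is the sum of the $m\times m$ principal minors of $W$, which is exactly $\sigma_m(\lambda(W))$. It then suffices to differentiate the left-hand side in $W_{ij}$ (resp.\ in $W_{ij}$ and $W_{pq}$), evaluate at a diagonal matrix $W=\DIAG(\lambda_1,\dots,\lambda_n)$ with $\lambda_l=W_{ll}$, and compare the coefficient of $t^{\,n-m}$ on the two sides; here $\sigma_{m-1}(W|i)$ and $\sigma_{m-2}(W|ip)$ are understood as $\sigma_{m-1}(\lambda(W)|i)$ and $\sigma_{m-2}(\lambda(W)|ip)$.

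For the first derivative I would use the classical formula $\partial\det A/\partial A_{ij}=\det A\cdot (A^{-1})_{ji}$. Taking $A=W+tI$ with $W$ diagonal gives $(A^{-1})_{ji}=\delta_{ij}/(\lambda_i+t)$, so that at a diagonal $W$,
\[
\frac{\partial}{\partial W_{ij}}\det(W+tI)=\delta_{ij}\prod_{l\neq i}(\lambda_l+t)=\delta_{ij}\sum_{r=0}^{n-1}\sigma_r(\lambda|i)\,t^{\,n-1-r}.
\]
Matching the coefficient of $t^{\,n-m}=t^{\,(n-1)-(m-1)}$ in the differentiated generating identity yields $\partial\sigma_m/\partial W_{ij}=\delta_{ij}\,\sigma_{m-1}(\lambda|i)$, which is the first assertion.

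For the second derivative, differentiating once more and using $\partial (A^{-1})_{ji}/\partial A_{pq}=-(A^{-1})_{jp}(A^{-1})_{qi}$ gives
\[
\frac{\partial^2\det A}{\partial A_{ij}\partial A_{pq}}=\det A\,\Big[(A^{-1})_{ji}(A^{-1})_{qp}-(A^{-1})_{jp}(A^{-1})_{qi}\Big].
\]
With $A=W+tI$ and $W$ diagonal, the bracket becomes $\delta_{ij}\delta_{pq}/\big((\lambda_i+t)(\lambda_p+t)\big)-\delta_{jp}\delta_{qi}/\big((\lambda_j+t)(\lambda_q+t)\big)$, multiplied by $\prod_l(\lambda_l+t)$. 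Only three index patterns survive: for $i=j,\ p=q,\ i\neq p$ the first term gives $\prod_{l\neq i,p}(\lambda_l+t)$; for $i=q,\ j=p,\ i\neq j$ the second term gives $-\prod_{l\neq i,p}(\lambda_l+t)$; for $i=j=p=q$ the two terms cancel; in every other pattern the Kronecker deltas force $0$. Expanding $\prod_{l\neq i,p}(\lambda_l+t)=\sum_{r=0}^{n-2}\sigma_r(\lambda|ip)\,t^{\,n-2-r}$ and reading off the coefficient of $t^{\,n-m}=t^{\,(n-2)-(m-2)}$ then produces $\pm\sigma_{m-2}(\lambda|ip)$ with precisely the signs in the statement, and $0$ otherwise; the degenerate ranges $m\le 1$ are handled automatically since $\prod_{l\neq i,p}(\lambda_l+t)$ has degree $n-2$.

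I expect no genuine obstacle; the only points that need care are (i) justifying the formulas for $\partial\det$ and $\partial A^{-1}$ at a possibly singular diagonal $W$, which follows because every identity used is polynomial in the $W_{ij}$ and in $t$, so it suffices to verify it for generic $t$ (where $W+tI$ is invertible) and equate coefficients; and (ii) the bookkeeping of which index patterns survive the Kronecker deltas at the diagonal, in particular the cancellation when all four indices coincide, which is exactly why that configuration is subsumed under the vanishing alternative. Everything else is routine coefficient comparison.
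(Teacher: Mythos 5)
Your argument is correct. The paper itself states this proposition without proof, as a standard fact, so there is no in-text argument to compare against; your route through the generating identity $\det(W+tI)=\sum_{m}\sigma_m(W)\,t^{\,n-m}$ together with Jacobi's formula and $\partial (A^{-1})_{ji}/\partial A_{pq}=-(A^{-1})_{jp}(A^{-1})_{qi}$ is a clean and complete way to derive both formulas, and your case analysis of the Kronecker deltas (including the cancellation when $i=j=p=q$) matches the three alternatives in the statement exactly. The two points you flag as needing care are handled adequately: invertibility for generic $t$ plus polynomiality justifies the evaluation at singular diagonal $W$, and your coefficient matching $t^{\,n-m}=t^{\,(n-1)-(m-1)}$, resp.\ $t^{\,(n-2)-(m-2)}$, is right. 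One convention worth stating explicitly in a write-up is that all $n^2$ entries $W_{ij}$ are treated as independent variables (rather than identifying $W_{ij}$ with $W_{ji}$ for a symmetric matrix); this is the convention under which the stated formulas hold without extra factors of $2$, and it is the one your computation implicitly uses.
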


The generalized Newton-MacLaurin inequality is as follows.
\begin{proposition}\label{NM}
For $\lambda \in \Gamma_m$ and $m > l \geq 0$, $ r > s \geq 0$, $m
\geq r$, $l \geq s$, we have
\begin{align}
\Bigg[\frac{{\sigma _m (\lambda )}/{C_n^m }}{{\sigma _l (\lambda
)}/{C_n^l }}\Bigg]^{\frac{1}{m-l}} \le \Bigg[\frac{{\sigma _r
(\lambda )}/{C_n^r }}{{\sigma _s (\lambda )}/{C_n^s
}}\Bigg]^{\frac{1}{r-s}}. \notag
\end{align}
\end{proposition}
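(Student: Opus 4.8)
The plan is to reduce Proposition~\ref{NM} to the classical Newton inequalities together with an elementary averaging argument for monotone sequences. First I would normalize: set $p_k:=\sigma_k(\lambda)/C_n^k$ for $0\le k\le n$, with $p_0:=1$. Since $\lambda\in\Gamma_m$, all of $p_0,p_1,\dots,p_m$ are strictly positive, so every ratio occurring in the statement, and in particular the numbers $t_k:=p_k/p_{k-1}$ for $1\le k\le m$, are well defined and positive; the assertion becomes $(p_m/p_l)^{1/(m-l)}\le(p_r/p_s)^{1/(r-s)}$. The one genuinely analytic ingredient is Newton's inequality $p_{k-1}p_{k+1}\le p_k^2$, valid for every $\lambda\in\mathbb R^n$ and $1\le k\le m-1$; this is classical --- it follows from the fact that $\prod_{i=1}^n(x+\lambda_i)$ and all of its derivatives are real-rooted (Rolle's theorem), which reduces the claim to the discriminant inequality for a real-rooted quadratic; see, e.g., \cite{Li96}. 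The hypothesis $\lambda\in\Gamma_m$ is then used only to divide by the positive quantities $p_{k-1},p_k$, yielding $t_{k+1}\le t_k$, i.e.\ $t_1\ge t_2\ge\cdots\ge t_m>0$.

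Next I would exploit the telescoping identity $(p_j/p_i)^{1/(j-i)}=\exp(g(i,j))$ for $0\le i<j\le m$, where $g(i,j):=\frac{1}{j-i}\sum_{k=i+1}^{j}\log t_k$ is the arithmetic mean of the non-increasing sequence $\{\log t_k\}$ over the block of indices $\{i+1,\dots,j\}$. Two one-line observations then carry the argument: enlarging the block on the right can only decrease this mean, because $\log t_{j+1}$ is $\le$ every earlier term, hence $\le g(i,j)$, so $g(i,j+1)\le g(i,j)$; and trimming the block from the left can only decrease it as well, because $\log t_{i+1}$ is $\ge$ every later term, hence $\ge g(i+1,j)$, so $g(i+1,j)\le g(i,j)$. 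Thus $g$ is non-increasing in each of its two arguments separately.

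To finish, the hypotheses $m>l$, $r>s$, $m\ge r$, $l\ge s$ give $m\ge r>s$, so $g(s,m)$ is defined, and
\[
g(l,m)\ \le\ g(s,m)\ \le\ g(s,r)
\]
by monotonicity in the first slot ($l\ge s$) and then in the second slot ($m\ge r$). Exponentiating and undoing the normalization recovers precisely $\big[(\sigma_m/C_n^m)/(\sigma_l/C_n^l)\big]^{1/(m-l)}\le\big[(\sigma_r/C_n^r)/(\sigma_s/C_n^s)\big]^{1/(r-s)}$, which is the claim. I expect Newton's inequality to be the only real obstacle --- once the ratios $t_k$ are known to be non-increasing, the remaining combinatorial averaging is routine. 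As consistency checks, $l=s=0$ collapses the statement to the Maclaurin inequality $p_m^{1/m}\le p_r^{1/r}$, while $(l,m,s,r)=(k-1,k+1,k-1,k)$ returns Newton's inequality itself.
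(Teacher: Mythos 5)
Your proof is correct. The paper itself gives no argument for Proposition \ref{NM}; it simply cites Spruck's lecture notes, and the standard proof found there is exactly the one you reconstruct: Newton's inequality $p_{k-1}p_{k+1}\le p_k^2$ for the normalized functions $p_k=\sigma_k/C_n^k$ (valid for all real $\lambda$), positivity of $p_0,\dots,p_m$ on $\Gamma_m$ to get the non-increasing ratios $t_k=p_k/p_{k-1}$, and then the observation that $\big(p_j/p_i\big)^{1/(j-i)}$ is the exponential of a block average of the non-increasing sequence $\log t_k$, which is monotone under both enlarging the block to the right and trimming it from the left. The chain $g(l,m)\le g(s,m)\le g(s,r)$ uses exactly the hypotheses $l\ge s$ and $m\ge r$, and your consistency checks (Maclaurin for $l=s=0$, Newton for adjacent indices) confirm the bookkeeping. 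The only thing worth adding for completeness is a sentence verifying that all three blocks $(l,m)$, $(s,m)$, $(s,r)$ are nonempty, which follows from $m>l$, $r>s$ and $m\ge r>s$; you implicitly use this when invoking $g(s,m)$.
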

\begin{proof}
See \cite{S05}.
\end{proof}

\section{Constant rank theorem for equation \eqref{G-eq}}
To ensure the maintenance of the convexity in solutions when employing  the continuity method, we can utilize a specific version of
 the following constant rank theorem.
\begin{theorem}\label{crt}
Suppose $u:\mathbb{S}^n \rightarrow \mathbb{R}$ is a solution of equation \eqref{G-eq} such that $(u_{ij}+u\delta_{ij})$ is semi-positive definite on $\mathbb{S}^n$, $\varphi$ is a positive smooth function satisfying one of the following conditions:

(1) if $p\geq1, q\leq k+1$, $\left(\varphi^{-\frac{1}{k+p-1}}\right)_{ii}+\varphi^{-\frac{1}{k+p-1}}\geq0$;


(2) if $p\geq1, k+1<q<2k+p$, $\left(\varphi^{-\frac{1}{k+p-1}}\right)_{ii}+\frac{2k+p-q}{k+p-1}\varphi^{-\frac{1}{k+p-1}}\geq0$.\\
Then $(u_{ij}+u\delta_{ij})$ is positive definite on $\mathbb{S}^n$.
\end{theorem}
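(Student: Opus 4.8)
The plan is to follow the by-now-standard strategy for constant rank theorems initiated by Caffarelli–Guan–Ma and adapted to equations on the sphere (as in Guan–Ma \cite{GM03}, Guan–Lin–Ma \cite{GM06}, Guan–Ma–Zhou \cite{GM10}). Write $W = (W_{ij}) = (u_{ij} + u\delta_{ij})$, which by hypothesis is semi-positive definite, and let $l$ be the minimal rank of $W$ over $\mathbb{S}^n$, attained at some point $x_0$. We want to show $l = n$. Suppose for contradiction $l \leq n-1$. The idea is to produce a nonnegative function $\phi$, built from $\sigma_{l+1}(W)$ together with a first-order correction term of the form $\sigma_{l+2}(W)/\sigma_{l+1}(W)$, that vanishes at $x_0$ and satisfies a differential inequality of the form
\begin{equation*}
\sum_{i,j} a^{ij}\phi_{ij} \leq C\big(\phi + |\nabla\phi|\big)
\end{equation*}
in a neighborhood of $x_0$, where $a^{ij} = \frac{\partial \sigma_k}{\partial W_{ij}}(W)$ is the (positive semi-definite on the good directions) linearized operator. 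The strong minimum principle then forces $\phi \equiv 0$ near $x_0$, hence $\sigma_{l+1}(W) \equiv 0$, so the minimal rank is attained on an open set; a connectedness argument propagates this over $\mathbb{S}^n$, and then a contradiction with the equation \eqref{G-eq} (whose right-hand side is strictly positive, forcing $\sigma_k(W) > 0$ and hence $\mathrm{rank}\, W \geq k$, but more importantly ruling out the degenerate closed condition) completes the argument. Actually the cleanest way to close is the standard one: the set where the rank is minimal is both open (by the minimum principle just proved) and closed (trivially), hence all of $\mathbb{S}^n$; but then $W$ is identically rank $l$, and differentiating the equation one derives that this is incompatible with $\varphi > 0$ unless $l = n$.

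The technical heart is the computation near $x_0$. I would choose normal coordinates at each nearby point so that $W$ is diagonal with $W_{11} \geq \cdots \geq W_{nn} \geq 0$, split indices into the "bad" set $B = \{n-l+1, \dots, n\}$ (where eigenvalues are small, i.e.\ $O(\phi)$) and the "good" set $G = \{1, \dots, n-l\}$ (eigenvalues bounded below), and carefully Taylor-expand $\sigma_{l+1}(W)$ and its derivatives. Using Propositions 2.5 and 2.6 (the first and second derivative formulas for $\sigma_m$) one computes $a^{ij}(\sigma_{l+1})_{ij}$; the first-order derivatives $(\sigma_{l+1})_i$ can be re-expressed modulo $\phi$-controlled terms, and the crucial third-derivative terms $u_{iij}$ must be eliminated using the differentiated equation \eqref{G-eq}. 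Here is where the specific structure of the right-hand side $u^{p-1}(u^2+|\nabla u|^2)^{\frac{k+1-q}{2}}\varphi(x)$ enters: taking $\log$ of \eqref{G-eq} gives
\begin{equation*}
\log\sigma_k(W) = (p-1)\log u + \tfrac{k+1-q}{2}\log(u^2 + |\nabla u|^2) + \log\varphi,
\end{equation*}
and differentiating twice produces the Hessian of the right side, which involves $(\log\varphi)_{ij}$, terms in $u$, $u_i$, $u_{ij} = W_{ij} - u\delta_{ij}$, and $\rho^2 = u^2 + |\nabla u|^2$. After substituting $u_{ij} = W_{ij} - u\delta_{ij}$ and collecting, the "bad" part of this Hessian must have the right sign. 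This is precisely what Assumption \ref{cdt-091} is engineered to guarantee: rewriting the concavity/convexity requirement in terms of $\psi := \varphi^{-\frac{1}{k+p-1}}$ converts the awkward mixed condition into the clean inequalities $\psi_{ii} + \psi \geq 0$ (case $q \leq k+1$) or $\psi_{ii} + \frac{2k+p-q}{k+p-1}\psi \geq 0$ (case $k+1 < q < 2k+p$), and the exponent $-\frac{1}{k+p-1}$ is exactly the one that makes the $u^{p-1}$ and $\rho^{k+1-q}$ contributions combine with $\varphi$ in the required way. The concavity of $\sigma_k^{1/k}$ (Proposition 2.4(5)) supplies the other half of the sign control, handling the second-order terms coming from $\sigma_k$ itself.

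The main obstacle, as always in these arguments, is the bookkeeping in the differential inequality: one must show that \emph{every} term not manifestly of the form $C(\phi + |\nabla\phi|)$ cancels or has a favorable sign. The genuinely delicate pieces are (i) the "good–good" second derivative terms of $\sigma_{l+1}$, which are handled by the concavity of $\sigma_k^{1/k}$ combined with a Cauchy–Schwarz / trace trick on the good block, (ii) the cross terms between the $\sigma_{l+1}$ part and the $\sigma_{l+2}/\sigma_{l+1}$ correction, which are designed to cancel the worst positive contributions, and (iii) verifying that the range condition $q < 2k+p$ in Assumption \ref{cdt-091}(2) is exactly what keeps the coefficient $\frac{2k+p-q}{k+p-1}$ positive so that the convexity hypothesis on $\psi$ is non-vacuous and usable. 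I expect that once the problem is phrased in terms of $\psi = \varphi^{-1/(k+p-1)}$ and one tracks the $u$- and $\rho$-dependent terms through the standard machinery, the argument parallels Guan–Ma–Zhou \cite{GM10} closely, with the dual/$L_p$ modifications contributing only lower-order ($\phi$-controlled) terms plus the sign-definite contribution isolated by the assumption.
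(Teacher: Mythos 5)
Your skeleton (minimal-rank point, test function $\phi=\sigma_{l+1}(W)+\sigma_{l+2}(W)/\sigma_{l+1}(W)$, good/bad eigenvalue split, a differential inequality $a^{ij}\phi_{ij}\le C(\phi+|\nabla\phi|)$, strong minimum principle plus open–closed) is the same as the paper's, which follows the Bian--Guan framework \cite{BG09}. But the two places where you wave at the actual content do not work as described. First, you propose to control the good-block second-derivative terms by ``the concavity of $\sigma_k^{1/k}$ combined with Cauchy--Schwarz.'' Concavity only gives $\sum F^{\alpha\beta,rs}X_{\alpha\beta}X_{rs}\le 0$, which is the wrong direction: what must be proved is the quantitative lower bound
\begin{equation*}
2\sum_{\alpha,j\in G}F^{\alpha\alpha}\frac{b_{j\alpha i}^2}{b_{jj}}+\sum_{\alpha,\beta,r,s\in G}F^{\alpha\beta,rs}b_{\alpha\beta i}b_{rsi}\ \ge\ 0,
\end{equation*}
i.e.\ that the (nonpositive) Hessian term is dominated by the explicit positive term. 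This is an \emph{inverse convexity} property, not concavity; the paper obtains it (inequality \eqref{inco}) by rewriting the equation as $-\sigma_k^{-1/k}(W)=\widetilde f$ and using that $W\mapsto-\sigma_k^{-1/k}(W^{-1})=-\big(\sigma_n/\sigma_{n-k}\big)^{1/k}(W)$ is convex. Relatedly, your log-differentiation of \eqref{G-eq} is not the formulation in which Assumption \ref{cdt-091} closes the argument: the reduction of the bad-direction terms to the sign conditions on $\varphi^{-1/(k+p-1)}$ is carried out for $\widetilde f=u^{-\frac{p-1}{k}}(u^2+|\nabla u|^2)^{-\frac{k+1-q}{2k}}\big(-\varphi^{-\frac1k}\big)$, and it hinges on a Cauchy--Schwarz step (\eqref{CS}) that is valid only for $t=-\frac{p-1}{k}\le 0$ or $t>1$; this is exactly where $p\ge 1$ enters and where the ranges $q\ge 2k+p$ and $p<1-k$ are excluded. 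Asserting that the assumption ``is engineered to make it work'' defers precisely the computation that constitutes the proof.

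Second, your passage from constant rank to full rank is not a valid argument: if the constant rank $l$ satisfies $l\ge k$, the equation $\sigma_k(W)=u^{p-1}(u^2+|\nabla u|^2)^{\frac{k+1-q}{2}}\varphi>0$ is perfectly consistent pointwise, so ``differentiating the equation'' produces no contradiction with $\varphi>0$. A global argument is required; the paper appeals to the Minkowski integral formula (as in \cite{GM03}), which relates $\int_{\mathbb{S}^n}\sigma_l(W)$ to $\int_{\mathbb{S}^n}u\,\sigma_{l+1}(W)$ and hence, once $\sigma_{l+1}(W)\equiv 0$ and $u>0$, forces $\sigma_l(W)\equiv 0$, contradicting $\sigma_l(W)\ge c_0>0$ near $x_0$. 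With these two repairs (inverse convexity in place of concavity, and the integral-formula ending), your outline coincides with the paper's proof; without them it has genuine gaps.
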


\begin{proof}
Suppose $W=(u_{ij}+u\delta_{ij})$ attains its minimal rank $l$ at some point $x_0\in\mathbb{S}^n$, we have $\sigma_l(W)(x_0)>0$ and $\sigma_{l+1}(W)(x_0)=0$. Then there exists a small open neighborhood $\mathcal{O}$ of $x_0$ and a small positive constant $c_0$ such that $\sigma_l(W)(x_0)\geq c_0>0$. We may assume that $l\leq n-1$, otherwise we are done.

For convenience, we denote $\lambda=(\lambda_1,\cdots,\lambda_n)$ and $\lambda_i$ are eigenvalues of $W$. For each $x\in\mathcal{O}$, we can rotate coordinate such that $W=(b_{ij})$ is diagonal and $b_{11}\leq b_{22}\leq\cdots\leq b_{nn}$ at $x$.
Consider the test function
$$\phi(x)=\sigma_{l+1}(W)+\frac{\sigma_{l+2}}{\sigma_{l+1}}(W).$$
Following the notations in \cite{Gu02}, we say that $h(y)\lesssim k(y)$ provided there exist positive constants $c_1$ and $c_2$ such that
$$(h-k)(y)\leq\left(c_1|\nabla\phi|+c_2\phi\right)(y),$$
and we write $h(y)\sim k(y)$ if $h(y)\lesssim k(y)$ and $k(y)\lesssim h(y)$.

Let $D=\{1,\cdots,n-l\}$ and $G=\{n-l+1,\cdots,n\}$ be the sets of indices for eigenvalues $\lambda_i$. Let $\Lambda_D=(\lambda_1, \cdots, \lambda_{n-l})$ be the ``bad" eigenvalues of $W$ and $\Lambda_G=(\lambda_{n-l+1}, \cdots, \lambda_n)$ be the ``good" eigenvalues of $W$, for convenience, we also write $D=\Lambda_D, G=\Lambda_G$ if there is no confusion. Hence we get
\begin{equation}\label{equ1}
  0\sim\phi(x)\sim\sigma_{l+1}(W)\sim\sigma_l(G)\sum_{i\in D}b_{ii}\sim\sum_{i\in D}b_{ii}.
\end{equation}
For the convenience of calculation, equation \eqref{G-eq} can be expressed as
\begin{equation}\label{eq1}
 -\sigma_k^{-\frac{1}{k}}(u_{ij}+u\delta_{ij})=u^t(u^2+|\nabla u|^2)^{\frac{s}{2}}\widetilde{\varphi}(x):=\widetilde{f},
\end{equation}
where $s=-\frac{k+1-q}{k}$, $t=-\frac{p-1}{k}$ and $\widetilde{\varphi}=-\varphi^{-\frac{1}{k}}$.
Denote $F=-\sigma_k^{-\frac{1}{k}}$, then
 $$F^{ij}=\frac{\partial F}{\partial b_{ij}},\quad
F^{ij,rs}=\frac{\partial^2F}{\partial b_{ij}\partial b_{rs}},\quad\widetilde{f}_i=\frac{\partial\widetilde{f}}{\partial x_i},\quad\widetilde{f}_{ij}=\frac{\partial^2\widetilde{f}}{\partial x_i\partial x_j}.$$
 Differentiating equation \eqref{eq1} twice, we obtain
$$F^{\alpha\beta}b_{\alpha\beta i}=\widetilde{f}_i, \quad F^{\alpha\beta}b_{\alpha\beta ii}+F^{\alpha\beta,rs}b_{\alpha\beta i}b_{rsi}=\widetilde{f}_{ii}.$$
Following the idea of \cite{BG09} (can also see \cite{CX22}), we get
\begin{eqnarray}\label{eqnar1}
 \nonumber F^{\alpha\beta}\phi_{\alpha\beta} &=&O(\phi+\sum_{i,j\in D}|\nabla b_{ij}|)-\frac{1}{\sigma_1(D)}\sum_{\alpha}\sum_{i\neq j\in D}F^{\alpha\alpha}b_{ij\alpha}^2 \\
 \nonumber &&-\frac{1}{\sigma_1(D)^3}\sum_{\alpha}\sum_{i\in D}F^{\alpha\alpha}(b_{ii\alpha}\sigma_1(D)-b_{ii}\sum_{j\in D}b_{jj\alpha})^2\\
 \nonumber &&-2\sum_{i\in D}\left(\sigma_l(G)+\frac{\sigma_1(D|i)^2-\sigma_2(D|i)}{\sigma_1(D)^2}\right)\sum_{\alpha, j\in G}F^{\alpha\alpha}\frac{b_{j\alpha i}^2}{b_{jj}}\\
  &&+\sum_{i\in D}\left(\sigma_l(G)+\frac{\sigma_1(D|i)^2-\sigma_2(D|i)}{\sigma_1(D)^2}\right)\sum_{\alpha}F^{\alpha\alpha}b_{ii\alpha\alpha}.
\end{eqnarray}
For any $i\in D$, we have
\begin{eqnarray}\label{eqnar2}
 \nonumber F^{\alpha\alpha}b_{ii\alpha\alpha} &=& F^{\alpha\alpha}(b_{\alpha\alpha ii}+b_{ii}-b_{\alpha\alpha}) \\
 \nonumber &=& -F^{\alpha\beta,rs}b_{\alpha\beta i}b_{rsi}+\widetilde{f}_{ii}+\widetilde{f}+O(\phi)\\
  &=& -\sum_{\alpha,\beta,r,s\in G}F^{\alpha\beta,rs}b_{\alpha\beta i}b_{rsi}+\widetilde{f}_{ii}+\widetilde{f}+O(\phi+\sum_{i,j\in D}|\nabla b_{ij}|).
\end{eqnarray}
We claim that
\begin{equation}\label{claim}
  \sum_{i\in D}(\widetilde{f}_{ii}+\widetilde{f})\leq O(\phi+\sum_{i,j\in D}|\nabla b_{ij}|).
\end{equation}
If the claim is true, then combining with \eqref{eqnar1}-\eqref{claim}, we derive
\begin{eqnarray}\label{eqnar3}
  \nonumber F^{\alpha\beta}\phi_{\alpha\beta} &\leq& O(\phi+\sum_{i,j\in D}|\nabla b_{ij}|)-\frac{1}{\sigma_1(D)}\sum_{\alpha}\sum_{i\neq j\in D}F^{\alpha\alpha}b_{ij\alpha}^2\\
 \nonumber&&-\frac{1}{\sigma_1(D)^3}\sum_{\alpha}\sum_{i\in D}F^{\alpha\alpha}(b_{ii\alpha}\sigma_1(D)-b_{ii}\sum_{j\in D}b_{jj\alpha})^2\\
  &&-\sum_{i\in D}\left(\sigma_l(G)+\frac{\sigma_1(D|i)^2-\sigma_2(D|i)}{\sigma_1(D)^2}\right)\nonumber\\
  &&\cdot\left(2\sum_{\alpha,j\in G}F^{\alpha\alpha}\frac{b_{j\alpha i}^2}{b_{jj}}+\sum_{\alpha,\beta,r,s\in G}F^{\alpha\beta,rs}b_{\alpha\beta i}b_{rsi}\right).
\end{eqnarray}
In fact, $F(W^{-1})=-\sigma_k^{-\frac{1}{k}}(W^{-1})=-\left(\frac{\sigma_n(W)}{\sigma_{n-k}(W)}\right)^{\frac{1}{k}}$ is convex with respect to $W$, i.e., $F(W)$ is ``inverse convex'' with respect to $W$. It is equivalent to
$$F^{\alpha\beta,rs}X_{\alpha\beta}X_{rs}+2\frac{F^{\alpha r}}{b_{\beta s}}X_{\alpha\beta}X_{rs}\geq 0, \quad \forall~(X_{\alpha\beta})\in \mbox{Sym}(n).$$
Taking $X_{\alpha\beta}=-b_{\alpha\beta i}$ for $\alpha, \beta\in G$ and otherwise $X_{\alpha\beta}=0$, then we obtain
\begin{equation}\label{inco}
2\sum_{\alpha,j\in G}F^{\alpha\alpha}\frac{b_{j\alpha i}^2}{b_{jj}}+\sum_{\alpha,\beta,r,s\in G}F^{\alpha\beta,rs}
b_{\alpha\beta i}b_{rsi}\geq 0.
\end{equation}
Hence by \eqref{eqnar3}, \eqref{inco} and the idea in \cite{BG09} (can also see \cite{CX22}), we get
\begin{eqnarray*}
  F^{\alpha\beta}\phi_{\alpha\beta} &\leq&O(\phi+\sum_{i,j\in D}|\nabla b_{ij}|)-\frac{1}{\sigma_1(D)}\sum_{\alpha}\sum_{i\neq j\in D}F^{\alpha\alpha}b_{ij\alpha}^2\\
 &&-\frac{1}{\sigma_1(D)^3}\sum_{\alpha}\sum_{i\in D}F^{\alpha\alpha}(b_{ii\alpha}\sigma_1(D)-b_{ii}\sum_{j\in D}b_{jj\alpha})^2\\
  &\leq& O(\phi+|\nabla\phi|)\\
  &\lesssim& 0,
\end{eqnarray*}
for any $x\in \mathcal{O}$.
Therefore by the strong minimum principle, $\phi\equiv0$ in $\mathcal{O}$, thus $\{x:\phi(x)=0\}$ is an open and closed set. So $\phi\equiv0$, i.e., $W=(u_{ij}+u\delta_{ij})$ is of constant rank on $\mathbb{S}^n$. Then the Minkowski integral formula (see \cite{GM03}) implies $W$ is of full rank, the proof is finished. So the rest is to prove the following claim:
$$\sum_{i\in D}(\widetilde{f}_{ii}+\widetilde{f})\leq O(\phi+\sum_{i,j\in D}|\nabla b_{ij}|).$$

\emph{Proof of Claim:} Since $b_{ii}=u_{ii}+u=O(\phi)$, then we derive
\begin{eqnarray*}
  \sum_{i\in D}(\widetilde{f}_{ii}+\widetilde{f}) &=& \sum_{i\in D}(\widetilde{f}_{x_ix_i}+2\widetilde{f}_{x_iz}u_i+\widetilde{f}_{zz}u_i^2)+\sum_k\sum_{i\in D}\widetilde{f}_{p_k}u_{iik}+\sum_{i\in D}\widetilde{f}\\
  && +\sum_{i\in D}u_{ii}(2\widetilde{f}_{x_ip_i}+\widetilde{f}_z+2\widetilde{f}_{zp_i}u_i+\widetilde{f}_{p_ip_i}u_{ii})\\
  &=&O(\phi+\sum_{i,j\in D}|\nabla b_{ij}|)+\sum_{i\in D}(\widetilde{f}_{x_ix_i}+2\widetilde{f}_{x_iz}u_i+\widetilde{f}_{zz}u_i^2-2\widetilde{f}_{x_ip_i}u\\
  &&-\widetilde{f}_zu-2\widetilde{f}_{zp_i}uu_i+\widetilde{f}_{p_ip_i}u^2-\sum_k\widetilde{f}_{p_k}u_k+\widetilde{f}).
\end{eqnarray*}
Denote $I:=\sum_{i\in D}(\widetilde{f}_{x_ix_i}+2\widetilde{f}_{x_iz}u_i+\widetilde{f}_{zz}u_i^2-2\widetilde{f}_{x_ip_i}u
-\widetilde{f}_zu-2\widetilde{f}_{zp_i}uu_i+\widetilde{f}_{p_ip_i}u^2-\sum_k\widetilde{f}_{p_k}u_k+\widetilde{f})$, we only need to derive $I\leq0$, then the claim is proved.

By direct calculation, we get
\begin{eqnarray}\label{pku}
 \nonumber I&=&u^t(u^2+|\nabla u|^2)^{\frac{s}{2}}\sum_{i\in D}\bigg(\widetilde{\varphi}_{ii}+\frac{2tu_i\widetilde{\varphi}_i}{u}+\frac{t(t-1)u_i^2\widetilde{\varphi}}{u^2}\\
  &&+\frac{su_i^2\widetilde{\varphi}}
  {u^2+|\nabla u|^2}-\frac{s\sum_ku_k^2\widetilde{\varphi}}{u^2+|\nabla u|^2}+(1-t)\widetilde{\varphi}\bigg),
\end{eqnarray}
where $s,t,\widetilde{\varphi}$ are defined in \eqref{eq1}.
Then we have
\begin{equation}\label{CS}
\frac{2tu_i\widetilde{\varphi}_i}{u}+\frac{t(t-1)u_i^2\widetilde{\varphi}}{u^2}\leq\frac{t\widetilde{\varphi}_i^2}{(1-t)\widetilde{\varphi}},
\end{equation}
with $t\leq0$ or $t>1$.\\
Next we continue the proof with four cases.

\emph{Case 1:} $t\leq0, s\leq0$, i.e., $p\geq1, q\leq k+1$.\\
When $s\leq0$, we get
\begin{eqnarray}\label{ca1}
  \nonumber\sum_{i\in D}\left(\frac{su_i^2\widetilde{\varphi}}
  {u^2+|\nabla u|^2}-\frac{s\sum_ku_k^2\widetilde{\varphi}}{u^2+|\nabla u|^2}\right) &=& \frac{s\widetilde{\varphi}\sum_{i\in D}u_i^2}{u^2+|\nabla u|^2}-\frac{s(n-l)\widetilde{\varphi}\sum_ku_k^2}{u^2+|\nabla u|^2} \\
  \nonumber &\leq& \frac{s\widetilde{\varphi}\sum_{i\in D}u_i^2}{u^2+|\nabla u|^2}-\frac{s\widetilde{\varphi}\sum_ku_k^2}{u^2+|\nabla u|^2}\\
   \nonumber&=&-\frac{s\widetilde{\varphi}\sum_{i\in G}u_i^2}{u^2+|\nabla u|^2}\\
   &\leq&0.
\end{eqnarray}
By \eqref{pku}-\eqref{ca1} and the assumption (1) on $\varphi$, we derive
\begin{eqnarray*}
  &&I \leq u^t(u^2+|\nabla u|^2)^{\frac{s}{2}}\sum_{i\in D}\left(\widetilde{\varphi}_{ii}+\frac{t\widetilde{\varphi}_i^2}{(1-t)\widetilde{\varphi}}+(1-t)\widetilde{\varphi}\right)\\
  &=&u^{-\frac{p-1}{k}}(u^2+|\nabla u|^2)^{-\frac{k+1-q}{2k}}\sum_{i\in D}\left(-(\varphi^{-\frac{1}{k}})_{ii}+\frac{p-1}{p+k-1}\frac{(\varphi^{-\frac{1}{k}})_i^2}{\varphi^{-\frac{1}{k}}}-\frac{p+k-1}{k}\varphi^{-\frac{1}{k}}\right)\\
  &=&-\frac{p+k-1}{k}u^{-\frac{p-1}{k}}(u^2+|\nabla u|^2)^{-\frac{k+1-q}{2k}}\varphi^{-\frac{p-1}{k(k+p-1)}}\sum_{i\in D}\left((\varphi^{-\frac{1}{k+p-1}})_{ii}+\varphi^{-\frac{1}{k+p-1}}\right)\\
  &\leq& 0.
\end{eqnarray*}

\emph{Case 2:} $t\leq0, s>0$, i.e., $p\geq1, q> k+1$.\\
When $s> 0$, we get
\begin{equation}\label{sge}
  \sum_{i\in D}\left(\frac{su_i^2\widetilde{\varphi}}
  {u^2+|\nabla u|^2}-\frac{s\sum_ku_k^2\widetilde{\varphi}}{u^2+|\nabla u|^2}\right)\leq-s\sum_{i\in D}\widetilde{\varphi}.
\end{equation}
By \eqref{pku}, \eqref{CS}, \eqref{sge} and the assumption (2) on $\varphi$, we have
\begin{eqnarray*}
  &&I\leq u^t(u^2+|\nabla u|^2)^{\frac{s}{2}}\sum_{i\in D}\left(\widetilde{\varphi}_{ii}+\frac{t\widetilde{\varphi}_i^2}{(1-t)\widetilde{\varphi}}+(1-t-s)\widetilde{\varphi}\right) \\
  &=& -u^{-\frac{p-1}{k}}(u^2+|\nabla u|^2)^{-\frac{k+1-q}{2k}}\sum_{i\in D}\left((\varphi^{-\frac{1}{k}})_{ii}-\frac{p-1}{p+k-1}\frac{(\varphi^{-\frac{1}{k}})_i^2}{\varphi^{-\frac{1}{k}}}
  +\frac{2k+p-q}{k}\varphi^{-\frac{1}{k}}\right)\\
  &=&-\frac{p+k-1}{k}u^{-\frac{p-1}{k}}(u^2+|\nabla u|^2)^{-\frac{k+1-q}{2k}}\varphi^{-\frac{p-1}{k(k+p-1)}}\\
  &&\cdot\sum_{i\in D}\left((\varphi^{-\frac{1}{k+p-1}})_{ii}+\frac{2k+p-q}{k+p-1}\varphi^{-\frac{1}{k+p-1}}\right)\\
  &\leq&0.
\end{eqnarray*}
At the maximum point of $\varphi^{-\frac{1}{k+p-1}}$, the condition $\left(\varphi^{-\frac{1}{k+p-1}}\right)_{ii}+\frac{2k+p-q}{k+p-1}\varphi^{-\frac{1}{k+p-1}}\geq0$ is contradict with $\varphi>0, p\geq 1, q\geq2k+p$, hence we need $p\geq1, k+1<q<2k+p$.

\emph{Case 3:} $t>1, s\leq0$, i.e., $p<1-k, q\leq k+1$.\\
Similar to Case 1, we also have
$$I\leq-\frac{p+k-1}{k}u^{-\frac{p-1}{k}}(u^2+|\nabla u|^2)^{-\frac{k+1-q}{2k}}\varphi^{-\frac{p-1}{k(k+p-1)}}\sum_{i\in D}\left((\varphi^{-\frac{1}{k+p-1}})_{ii}+\varphi^{-\frac{1}{k+p-1}}\right)\leq 0$$
by assuming $\left(\varphi^{-\frac{1}{k+p-1}}\right)_{ii}+\varphi^{-\frac{1}{k+p-1}}\leq0$. But at the minimum point of $\varphi^{-\frac{1}{k+p-1}}$, the condition $\left(\varphi^{-\frac{1}{k+p-1}}\right)_{ii}+\varphi^{-\frac{1}{k+p-1}}\leq0$ is contradict with $\varphi>0$. This case does not occur.

\emph{Case 4:} $t>1, s> 0$, i.e., $p<1-k, q> k+1$.\\
Be analogue to Case 2, we have
\begin{eqnarray*}
 I&\leq& -\frac{p+k-1}{k}u^{-\frac{p-1}{k}}(u^2+|\nabla u|^2)^{-\frac{k+1-q}{2k}}\varphi^{-\frac{p-1}{k(k+p-1)}}\\
  &&\cdot\sum_{i\in D}\left((\varphi^{-\frac{1}{k+p-1}})_{ii}+\frac{2k+p-q}{k+p-1}\varphi^{-\frac{1}{k+p-1}}\right)\\
  &\leq&0
\end{eqnarray*}
by assuming $\left(\varphi^{-\frac{1}{k+p-1}}\right)_{ii}+\frac{2k+p-q}{k+p-1}\varphi^{-\frac{1}{k+p-1}}\leq0$. At the minimum point of $\varphi^{-\frac{1}{k+p-1}}$, we also need the condition $k+1<q<2k+p$, which is contradict with $p<1-k$. This case does not occur. 
\end{proof}

\section{The case $p>q$}

\subsection{The a priori estimates}

We derive  $C^0$ estimates at first. According to \eqref{rho} and the fact that $\max_{\mathbb{S}^n}\rho=\max_{\mathbb{S}^n}u$, then $\max_{\mathbb{S}^n}|\nabla u|^2\leq \max_{\mathbb{S}^n}\rho^2=\max_{\mathbb{S}^n}u^2$. Hence $C^1$ estimates can be obtained from $C^0$ estimates.
\begin{theorem}\label{c0}
Suppose $\varphi$ is a positive smooth function and $u\in C^2(\mathbb{S}^n)$ is a positive admissible solution of equation \eqref{G-eq}. Then for $p>q$,
$$\frac{C_n^k}{\max_{\mathbb{S}^n} \varphi}\leq u(x)^{p-q}\leq\frac{C_n^k}{\min_{\mathbb{S}^n} \varphi}, \quad \forall~x\in \mathbb{S}^n.$$
\end{theorem}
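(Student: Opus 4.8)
The plan is to extract the $C^0$ bounds by evaluating equation \eqref{G-eq} at the points where $u$ attains its maximum and minimum on $\mathbb{S}^n$, exactly as in the classical Minkowski-type arguments. The key analytic input is the following elementary fact about the principal-radii matrix $b_{ij}=u_{ij}+u\delta_{ij}$: at an interior maximum point $x_{\max}$ of $u$ one has $\nabla u(x_{\max})=0$ and $(u_{ij})(x_{\max})\le 0$, hence $0\le b_{ij}(x_{\max})\le u(x_{\max})\delta_{ij}$ (the left inequality because $u$ is admissible, so $\lambda(W)\in\Gamma_k$ and in particular $\sigma_k>0$, together with the fact that at a rank point... — more simply, admissibility gives $\lambda\in\Gamma_k\subset\Gamma_1$, but we actually only need $\sigma_k(\lambda)>0$ and $\lambda_i\le u(x_{\max})$). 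By monotonicity of $\sigma_k$ on $\Gamma_k$ and the bound $\lambda_i\le u(x_{\max})$ for all $i$, we get $\sigma_k(b_{ij})(x_{\max})\le \sigma_k(u(x_{\max}),\dots,u(x_{\max}))=C_n^k\, u(x_{\max})^k$. Symmetrically, at the minimum point $x_{\min}$ we have $\nabla u(x_{\min})=0$ and $(u_{ij})(x_{\min})\ge 0$, so $\lambda_i\ge u(x_{\min})$ for all $i$, whence $\sigma_k(b_{ij})(x_{\min})\ge C_n^k\, u(x_{\min})^k$.

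Next I would substitute these two inequalities back into \eqref{G-eq}. At $x_{\max}$, since $\nabla u(x_{\max})=0$, the right-hand side is $u(x_{\max})^{p-1}\,(u(x_{\max})^2)^{\frac{k+1-q}{2}}\varphi(x_{\max}) = u(x_{\max})^{p+k-q}\,\varphi(x_{\max})$, so
\[
u(x_{\max})^{p+k-q}\varphi(x_{\max}) = \sigma_k(b_{ij})(x_{\max}) \le C_n^k\, u(x_{\max})^k,
\]
which rearranges to $u(x_{\max})^{p-q}\le C_n^k/\varphi(x_{\max}) \le C_n^k/\min_{\mathbb{S}^n}\varphi$. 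Since $u(x_{\max})=\max_{\mathbb{S}^n}u$ and (using $p>q$, so the exponent $p-q>0$ makes $t\mapsto t^{p-q}$ increasing) $u(x)^{p-q}\le u(x_{\max})^{p-q}$ for every $x$, this yields the upper bound $u(x)^{p-q}\le C_n^k/\min_{\mathbb{S}^n}\varphi$. Likewise at $x_{\min}$,
\[
u(x_{\min})^{p+k-q}\varphi(x_{\min}) = \sigma_k(b_{ij})(x_{\min}) \ge C_n^k\, u(x_{\min})^k,
\]
giving $u(x_{\min})^{p-q}\ge C_n^k/\varphi(x_{\min})\ge C_n^k/\max_{\mathbb{S}^n}\varphi$, and monotonicity of $t\mapsto t^{p-q}$ together with $u(x_{\min})=\min_{\mathbb{S}^n}u$ promotes this to the lower bound for all $x$.

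The only point that needs a little care — and the step I expect to be the mild obstacle — is justifying the eigenvalue pinching $u(x_{\min})\delta_{ij}\le b_{ij}(x_{\min})$ and $b_{ij}(x_{\max})\le u(x_{\max})\delta_{ij}$ purely from $\nabla u=0$ and the sign of the Hessian at the extremum, and then checking that plugging $\lambda_i \le u(x_{\max})$ into $\sigma_k$ is legitimate given only $\lambda\in\Gamma_k$ (here one uses that $\sigma_k$ is monotone increasing in each variable on $\Gamma_k$ by Proposition \ref{sigma}(2), so decreasing any $\lambda_i$ down from $u(x_{\max})$ only decreases $\sigma_k$, i.e.\ $\sigma_k(\lambda)\le \sigma_k(u(x_{\max})\mathbf{1}) = C_n^k u(x_{\max})^k$; the reverse direction at $x_{\min}$ is the same monotonicity applied with all $\lambda_i\ge u(x_{\min})$, which keeps us in $\Gamma_k$). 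Everything else is direct substitution and the observation that $p-q>0$ lets the extremal estimates propagate to all of $\mathbb{S}^n$.
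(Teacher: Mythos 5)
Your proposal is correct and follows essentially the same route as the paper's proof: evaluate equation \eqref{G-eq} at the extremum points of $u$, where $\nabla u=0$ and the sign of $\nabla^2 u$ pinches the eigenvalues of $b_{ij}=u_{ij}+u\delta_{ij}$ against $u$, then compare $\sigma_k$ with $C_n^k u^k$ and use $p-q>0$ to propagate the bounds. Your extra care about monotonicity of $\sigma_k$ on $\Gamma_k$ is a harmless elaboration of a step the paper takes for granted.
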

\begin{proof}
Assume that $\min_{\mathbb{S}^n}u(x)$ is attained at $x_0$, then at $x_0$ we get
$$|\nabla u|=0, \quad \nabla^2 u\geq 0.$$
Hence $\nabla^2u+uI\geq uI$, i.e., $b_{ii}\geq u$ for $i=1,2,\cdots,n$. Then,
$$\varphi(u^2+|\nabla u|^2)^{\frac{k+1-q}{2}}u^{p-1}=\sigma_k(\nabla^2u+uI)\geq C_n^ku^k.$$
Therefore
$$u(x_0)^{p-q}\geq\frac{C_n^k}{\varphi(x_0)}\geq\frac{C_n^k}{\max_{\mathbb{S}^n}\varphi}.$$
Similarly, assume that $\max_{\mathbb{S}^n}u(x)$ is attained at $x_1$, then at $x_1$ we get
$$|\nabla u|=0, \quad \nabla^2 u\leq 0.$$
Hence $\nabla^2u+uI\leq uI$, i.e., $b_{ii}\leq u$ for $i=1,2,\cdots,n$. Then,
$$\varphi(u^2+|\nabla u|^2)^{\frac{k+1-q}{2}}u^{p-1}=\sigma_k(\nabla^2u+uI)\leq C_n^ku^k.$$
Therefore
$$u(x_1)^{p-q}\leq\frac{C_n^k}{\varphi(x_1)}\leq\frac{C_n^k}{\min_{\mathbb{S}^n}\varphi}.$$
\end{proof}

Inspired by the work of Chu \cite{Chu21},
 we consider the following general Christoffel-Minkowski type equations with general right hand functions:
\begin{equation}\label{K-eq}
  \sigma_k(u_{ij}+u\delta_{ij})=f(x, u, \nabla u),\quad on~ \mathbb{S}^{n},
\end{equation}
and we derive $C^2$ estimates for spherical convex solutions
 of equation \eqref{K-eq}.
\begin{theorem}\label{c2-main}
Let $f: \mathbb{S}^n \times  \mathbb{R}^{1} \times \mathbb{R}^{n} \rightarrow \mathbb{R}$ be a positive smooth function, and $u:\mathbb{S}^n\rightarrow \mathbb{R}$ be a positive spherical convex solution of equation \eqref{K-eq}. Then there exists a constant $C$ depending only on $n, k, \inf u, \inf f, \|u\|_{C^1}$ and $\|f\|_{C^2}$ such that
$$\max_{\mathbb{S}^n}|\nabla^2u|\leq C.$$
\end{theorem}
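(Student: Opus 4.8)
The plan is to bound the largest principal radius of curvature, i.e. the maximal eigenvalue of $W=(u_{ij}+u\delta_{ij})$, by a barrier/maximum-principle argument applied to a suitable auxiliary function, and then to use the equation itself to convert this into a two-sided bound on $\nabla^2 u$. Since $u$ is spherical convex, $W\geq 0$, so it suffices to bound $\lambda_{\max}(W)$ from above; the lower bound $W\geq 0$ is automatic and, combined with $\sigma_k(W)=f>0$ and the upper bound on $\lambda_{\max}$, gives $|\nabla^2 u|\leq C$ because $u_{ij}=b_{ij}-u\delta_{ij}$ and $\|u\|_{C^1}$ is controlled.

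First I would set up the test function. Let $W(x)$ act on unit tangent vectors; for $\xi\in T_x\mathbb{S}^n$ with $|\xi|=1$ write $W(\xi,\xi)=u_{\xi\xi}+u$. Consider
\[
\Theta(x,\xi)=\log\big(W(\xi,\xi)\big)+\psi(|\nabla u|^2)+A u,
\]
where $\psi$ is a function to be chosen (e.g. $\psi(s)=-\tfrac12\log(2L-s)$ with $L=\|u\|_{C^1}^2$, which is convex and has controlled derivatives) and $A$ is a large constant depending on the allowed data. Suppose the maximum of $\Theta$ over the unit sphere bundle is attained at $(x_0,\xi_0)$; rotate the frame so that $W$ is diagonal at $x_0$ with $\xi_0=e_1$ and $\lambda_1=b_{11}\geq\lambda_2\geq\cdots\geq\lambda_n$. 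At $x_0$, $\nabla\Theta=0$ and the Hessian of $\Theta$ is $\leq 0$; I then contract the latter against the positive-definite matrix $F^{ij}=\partial\sigma_k/\partial b_{ij}$ (diagonal, with entries $\sigma_{k-1}(\lambda|i)>0$ by Proposition \ref{sigma}(2)). Differentiating the equation $\sigma_k(W)=f(x,u,\nabla u)$ once and twice and commuting covariant derivatives on $\mathbb{S}^n$ (the curvature terms produce only lower-order contributions, handled via the identity $b_{11ii}=b_{ii11}+b_{ii}-b_{11}$ as in \eqref{eqnar2}), one obtains at $x_0$ an inequality of the shape
\[
0\geq \frac{1}{\lambda_1}F^{ii}b_{11ii}-\frac{1}{\lambda_1^2}F^{ii}b_{11i}^2+\psi'\cdot F^{ii}(|\nabla u|^2)_{ii}+A F^{ii}b_{ii}-A\sigma_1(\lambda)u+(\text{controlled}).
\]

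The crucial terms to control are: (i) the term $\tfrac{1}{\lambda_1}F^{ii}b_{ii11}$, where differentiating $f$ twice produces $f_{p_kp_l}u_{k11}u_{l11}$-type terms with a potentially bad sign; here the standard trick is to use the first-order relation $F^{ii}b_{ii1}=f_1$ (bounded by the $C^1$-data and $\lambda_1$) together with $\nabla\Theta=0$ at $x_0$ to re-express $b_{111}$, $b_{ii1}$ in terms of $b_{11i}/\lambda_1$, $\psi'$, $A$; the resulting third-order-squared terms are then absorbed by the good negative term $-\tfrac1{\lambda_1^2}F^{ii}b_{11i}^2$ plus the convexity term $\psi''\cdot F^{ii}(\cdot)^2$ after a Cauchy-Schwarz split, provided $\psi$ is chosen suitably convex; (ii) the concavity of $\sigma_k^{1/k}$ (Proposition \ref{sigma}(5)) gives $F^{ij,rs}b_{ij1}b_{rs1}\leq 0$, which I would retain with its sign to help; (iii) the zeroth-order term: by Proposition \ref{sigma}(4),(7) one has $F^{ii}\geq c(n,k)$ in an averaged sense and $\sum_i F^{ii}b_{ii}=k\sigma_k=kf$ is bounded, so choosing $A$ large forces $A F^{ii}b_{ii}$ (or rather the companion positive terms coming from $F^{ii}b_{ii}=k\sigma_k$ and from $b_{ii}\geq 0$) to dominate the remaining terms linear in $\lambda_1$. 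The main obstacle is exactly step (i): handling the "bad" second-derivative-in-gradient terms $f_{p_kp_l}$ without any structural assumption on $f$ beyond $f>0$ and $f\in C^2$; this is where the choice of the gradient term $\psi(|\nabla u|^2)$ and the reduction via the critical-point equations must be done carefully, and it is precisely the point exploited in Chu \cite{Chu21}. Once all bad terms are absorbed, the inequality collapses to $0\geq c\lambda_1 - C$ at $x_0$, giving $\lambda_1(x_0)\leq C$; since $\Theta$ is maximal there and $\psi, u$ are bounded, $\lambda_{\max}(W)\leq C$ everywhere, and hence $\max_{\mathbb{S}^n}|\nabla^2 u|\leq C$ with $C=C(n,k,\inf u,\inf f,\|u\|_{C^1},\|f\|_{C^2})$.
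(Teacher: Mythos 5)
Your overall strategy (maximum principle for $\log\lambda_{\max}(\nabla^2u+uI)$ plus a function of $|\nabla u|^2$ plus a zeroth-order term, contracted with $\sigma_k^{ii}$) is the same as the paper's, but the proposal has a genuine gap at exactly the step that makes the theorem nontrivial, and it is not the step you flag. Differentiating $f(x,u,\nabla u)$ twice produces $f_{p_kp_l}u_{k1}u_{l1}$ (not $f_{p_kp_l}u_{k11}u_{l11}$), which is only $O(\lambda_1^2)$ and, after dividing by $\lambda_1$, is absorbed by the good term $2\varphi'\sigma_k^{ii}u_{ii}^2\geq \tfrac{c\,A}{K}\lambda_1$ as in \eqref{sig2}; the single third-order term $f_{p_l}b_{11l}$ is handled by the critical-point equation as in \eqref{sig3}. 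The real obstruction is the negative term $-\sigma_k^{ii}\widetilde{b}_{11i}^2/\lambda_1^2$ coming from $\log\lambda_1$. Your claim that it is absorbed by Cauchy--Schwarz via \eqref{Qi} together with the convexity of the gradient/zeroth-order terms is not correct as stated: since $\varphi''=(\varphi')^2/A$ and $\psi''=(\psi')^2/B$, those convexity terms recover only the fraction $\tfrac{1}{2B}\,\sigma_k^{ii}\widetilde{b}_{11i}^2/\lambda_1^2$ (this is \eqref{cont}), and the concavity-type quantity $-\sigma_k^{pp,qq}b_{pp1}b_{qq1}$ alone does not control the remainder, in particular the $i=1$ component $\sigma_k^{11}\widetilde{b}_{111}^2/\lambda_1^2$, when $1<k<n$. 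This is precisely why the paper needs Steps 2--4: the identity $\sigma_k^{11,pp}=\frac{\sigma_k^{pp}-\sigma_k^{11}}{\lambda_1-\lambda_p}$ to treat the components $p>1$ (Step 2), Lemma 7 of Guan--Ren--Wang \cite{Guan15} under the eigenvalue-gap hypotheses $\lambda_l\geq\delta\lambda_1$, $\lambda_{l+1}\leq\delta'\lambda_1$ to treat the $i=1$ component (Step 3), and the dichotomy/iteration over $l=1,\dots,k-1$ (Lemma \ref{lem}), with the final case $\lambda_k\geq\delta_k\lambda_1$ closed by the equation itself via $\delta_k^k\lambda_1^k\leq\sigma_k=f\leq C$. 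Writing that this ``is precisely the point exploited in Chu \cite{Chu21}'' defers the core of the proof rather than supplying it.

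Two further points. First, the zeroth-order term must be \emph{decreasing} in $u$: with the paper's $\psi(u)$, $\psi'\leq-\Lambda<0$, one gets the essential positive term $-\psi'u\sum_i\sigma_k^{ii}\geq\tfrac{\Lambda}{C}\sum_i\sigma_k^{ii}$ needed to absorb bad terms of size $CA\sum_i\sigma_k^{ii}$ produced by the gradient part; your choice $+Au$ with $A>0$ instead contributes $-Au\sum_i\sigma_k^{ii}$, a large negative term, so the sign is wrong. Second, $\lambda_1$ need not be smooth where its eigenspace has dimension greater than one; the paper resolves this with the perturbation $\widetilde{b}_{ij}=b_{ij}-T_{ij}$, and note that the second-derivative terms $2\sum_{p>1}\widetilde{b}_{1pi}^2/(\lambda_1-\widetilde{\lambda}_p)$ produced by differentiating the perturbed eigenvalue are used with a favorable sign in Steps 2 and 3, so replacing $\lambda_1$ by $W(\xi,\xi)$ on the unit sphere bundle is not a harmless substitute. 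These two issues are fixable, but together with the missing Step 3 machinery the proposal does not close the argument.
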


\begin{proof}
We consider the auxiliary function
$$Q=\log \lambda_1+\varphi(|\nabla u|^2)+\psi(u),$$
where $\lambda_1=\lambda_{max}(\nabla^2u+uI)$ is the largest eigenvalue of $\nabla^2u+uI$.

Define
$$\varphi(s)=-A\log(1-\frac{s}{2K}), \quad 0\leq s \leq K-1,$$
and
$$\psi(t)=-B\log(1+\frac{t}{2L}), \quad 0< t \leq L-1.$$
Here we set
$$K=\sup_{\mathbb{S}^n}|\nabla u|^2+1, \quad L=\sup_{\mathbb{S}^n}|u|+1,\quad B:=3L\Lambda$$
and $A, B, \Lambda>1$ are large constants to be determined later. Clearly, $\varphi$ satisfies
$$\frac{A}{2K}\leq\varphi'\leq \frac{A}{K}, \quad \varphi''=\frac{1}{A}(\varphi')^2,$$
and $\psi$ satisfies
$$\Lambda\leq-\psi'< \frac{3\Lambda}{2}, \quad \psi''= \frac{1}{B}(\psi')^2.$$
Assume that $Q$ attains its maximum at $x_0\in\mathbb{S}^n$. Denote $\lambda(\{b_{ij}\})=(\lambda_1,\lambda_2,\cdots,\lambda_n)$ are eigenvalues of $\nabla^2u+uI$, we can choose a local orthonormal frame $\{e_1,e_2,\cdots,e_n\}$ near $x_0$ such that
$$\lambda_i=\delta_{ij}b_{ij}, \quad \lambda_1\geq \lambda_2\geq \cdots \geq\lambda_n, \quad \mbox{at}~ x_0.$$
Since $Q$ may be not smooth at $x_0$ when the eigenspace of $\lambda_1$ has dimension strictly larger than 1, we need to perturb $b_{ij}$ by a diagonal matrix $T$ with $T_{11}=0$, $T_{22}=T_{33}=\cdots=T_{nn}=\frac{1}{3}$ at $x_0$.
Define the matrix by $\widetilde{b}_{ij}=b_{ij}-T_{ij}$ and denote its eigenvalues by $\widetilde{\lambda}_1\geq\widetilde{\lambda}_2\geq \cdots \geq\widetilde{\lambda}_n$. Then it follows that $\lambda_1\geq\widetilde{\lambda}_1$ near $x_0$ and
\begin{eqnarray*}
  \widetilde{\lambda}_i=
  \begin{cases}
  \lambda_1, ~&\mbox{if}~i=1,\\
  \lambda_i-T_{ii}, ~ &\mbox{if}~i>1,
  \end{cases}
  \quad \mbox{at}~x_0.
\end{eqnarray*}
Thus $\widetilde{\lambda}_1>\widetilde{\lambda}_2$ at $x_0$, then $\widetilde{\lambda}_1$ is smooth at $x_0$. We consider the new test function
$$\widetilde{Q}=\log \widetilde{\lambda}_1+\varphi(|\nabla u|^2)+\psi(u).$$
It still achieves a local maximum at $x_0$. Hence at $x_0$, we have
\begin{equation}\label{Qi}
  0=\widetilde{Q}_i=\frac{\widetilde{\lambda}_{1,i}}{\lambda_1}+\varphi'\nabla_i(|\nabla u|^2)+\psi'u_i,
\end{equation}
and
\begin{eqnarray}\label{Qii}
  \nonumber0\geq\sigma_k^{ii}\widetilde{Q}_{ii}&=&\sigma_k^{ii}(\log \widetilde{\lambda}_1)_{ii}+\varphi''\sigma_k^{ii}\left(\nabla_i(|\nabla u|^2)\right)^2\\
  &&+\varphi'\sigma_k^{ii}\nabla_{ii}(|\nabla u|^2)+\psi''\sigma_k^{ii}u_i^2+\psi'\sigma_k^{ii}u_{ii}.
\end{eqnarray}
We divide our proof into four steps. For convenience, we will use a unified notation $C$ to denote a constant depending on $n, k, \inf u, \inf f, \|u\|_{C^1}, \|f\|_{C^2}$ and the perturbation $T$.

$\mathbf{Step ~1:}$ We show that at $x_0$,
\begin{eqnarray}\label{step1}
 \nonumber 0&\geq& -\frac{\sigma_k^{pp,qq}b_{pp1}b_{qq1}}{\lambda_1}+2\sum_{p>1}\frac{\sigma_k^{11,pp}b_{11p}^2}{\lambda_1}+2\sum_{p>1}\frac{\sigma_k^{11}\widetilde{b}_{1p1}^2}
  {\lambda_1(\lambda_1-\widetilde{\lambda}_p)}+2\sum_{p>1}\frac{\sigma_k^{pp}\widetilde{b}_{1pp}^2}{\lambda_1(\lambda_1-\widetilde{\lambda}_p)}\\
  \nonumber &&+2\varphi'\sigma_k^{ii}u_{ii}^2
  -\frac{\sigma_k^{ii}\widetilde{b}_{11i}^2}{\lambda_1^2}+\varphi''\sigma_k^{ii}(\nabla_i(|\nabla u|^2))^2+\psi''\sigma_k^{ii}u_i^2-\frac{CKL^2}{\lambda_1}\\
  &&-C\lambda_1+\left(\frac{\Lambda}{C}-CA-\frac{C}{\lambda_1}\right)\sum_i\sigma_k^{ii}-CK\Lambda-CA.
\end{eqnarray}

The following calculations are all at $x_0$. First, we deal with the term $\sigma_k^{ii}(\log\widetilde{\lambda}_1)_{ii}$ in \eqref{Qii}.
Thus
$$\widetilde{\lambda}_{1,i}=\frac{\partial\widetilde{\lambda}_1}{\partial \widetilde{b}_{pq}}\widetilde{b}_{pqi}=\delta_{1p}\delta_{1q}\widetilde{b}_{pqi}=\widetilde{b}_{11i}=b_{11i}-T_{11i},$$
\begin{eqnarray*}
\widetilde{\lambda}_{1,ii}&=&\frac{\partial\widetilde{\lambda}_1}{\partial\widetilde{b}_{pq}}\widetilde{b}_{pqii}
+\frac{\partial^2\widetilde{\lambda}_1}{\partial\widetilde{b}_{pq}\partial\widetilde{b}_{rs}}\widetilde{b}_{pqi}\widetilde{b}_{rsi}\\
&=&\delta_{1p}\delta_{1q}\widetilde{b}_{pqii}+\left[(1-\delta_{1p})\frac{\delta_{1q}\delta_{1r}\delta_{ps}}
{\widetilde{\lambda}_1-\widetilde{\lambda}_p}+(1-\delta_{1r})\frac{\delta_{1s}\delta_{1p}\delta_{qr}}
{\widetilde{\lambda}_1-\widetilde{\lambda}_r}\right]\widetilde{b}_{pqi}\widetilde{b}_{rsi}\\
&=&\widetilde{b}_{11ii}+2\sum_{p>1}\frac{\widetilde{b}_{1pi}^2}{\lambda_1-\widetilde{\lambda}_p}=b_{11ii}-T_{11ii}+2\sum_{p>1}\frac{\widetilde{b}_{1pi}^2}
{\lambda_1-\widetilde{\lambda}_p}.
\end{eqnarray*}
Then
\begin{eqnarray}\label{sig0}
 \nonumber \sigma_k^{ii}(\log\widetilde{\lambda}_1)_{ii}&=& \frac{\sigma_k^{ii}\widetilde{\lambda}_{1,ii}}{\widetilde{\lambda}_1}-\frac{\sigma_k^{ii}\widetilde{\lambda}_{1,i}^2}{\widetilde{\lambda}_1^2}\\
  &\geq& \frac{\sigma_k^{ii}b_{11ii}}{\lambda_1}-\frac{C\sum_i\sigma_k^{ii}}{\lambda_1}+2\sum_{p>1}\frac{\sigma_k^{ii}\widetilde{b}_{1pi}^2}{\lambda_1
  (\lambda_1-\widetilde{\lambda}_p)}
  -\frac{\sigma_k^{ii}\widetilde{b}_{11i}^2}{\lambda_1^2}.
\end{eqnarray}
By the Ricci identity $b_{11ii}=b_{ii11}+b_{11}-b_{ii}$, we have
\begin{equation}\label{sig1}
  \sigma_k^{ii}b_{11ii} = \sigma_k^{ii}b_{ii11}+b_{11}\sum_i\sigma_k^{ii}-\sigma_k^{ii}b_{ii}=\sigma_k^{ii}b_{ii11}+b_{11}\sum_i\sigma_k^{ii}-kf.
\end{equation}
Differentiating \eqref{K-eq} twice and recalling that $b_{ij}=u_{ij}+u\delta_{ij}$, we derive
\begin{eqnarray}\label{sig2}
  \nonumber\sigma_k^{ii}b_{ii11} &\geq&-\sigma_k^{ij,pq}b_{ij1}b_{pq1}-CK-CKu_{11}-Cu_{11}^2+f_{p_l}u_{l11} \\
  &\geq& -\sigma_k^{ij,pq}b_{ij1}b_{pq1}-CK-CKb_{11}-Cb_{11}^2+f_{p_l}b_{l11}.
\end{eqnarray}
Combining with \eqref{Qi} and the fact that $b_{ijk}=b_{ikj}$, then
\begin{eqnarray}\label{sig3}
 \nonumber \sum_lf_{p_l}\frac{b_{l11}}{b_{11}}&=&\sum_lf_{p_l}\frac{b_{11l}}{b_{11}}=\sum_lf_{p_l}\frac{\widetilde{b}_{11l}+T_{11l}}{b_{11}}  \\
 \nonumber &\geq&-\frac{C}{b_{11}}+\sum_lf_{p_l}(-2\varphi'u_lu_{ll}-\psi'u_l)\\
  &\geq&-2\varphi'\sum_lf_{p_l}u_lu_{ll}+CK\psi'-\frac{C}{\lambda_1}.
\end{eqnarray}

By \eqref{sig0}-\eqref{sig3}, we get
\begin{eqnarray}\label{log}
 \nonumber \sigma_k^{ii}(\log\widetilde{\lambda}_1)_{ii} &\geq&-\frac{\sigma_k^{ij,pq}b_{ij1}b_{pq1}}{\lambda_1}+2\sum_{p>1}\frac{\sigma_k^{ii}\widetilde{b}_{1pi}^2}{\lambda_1(\lambda_1-\widetilde{\lambda}_p)}
-2\varphi'\sum_lf_{p_l}u_lu_{ll}\\
&&+\sum_i\sigma_k^{ii}-\frac{\sigma_k^{ii}\widetilde{b}_{11i}^2}{\lambda_1^2}-CK\Lambda-\frac{C\sum_i\sigma_k^{ii}}{\lambda_1}-\frac{CKL^2}{\lambda_1}-C\lambda_1.
\end{eqnarray}
Then for the term $\varphi'\sigma_k^{ii}\nabla_{ii}(|\nabla u|^2)$ in \eqref{Qii}, we have
\begin{eqnarray}\label{nu1}
 \nonumber \varphi'\sigma_k^{ii}\nabla_{ii}(|\nabla u|^2)&=& 2\varphi'\sigma_k^{ii}u_{ii}^2+2\varphi'\sigma_k^{ii}\sum_lu_lu_{lii} \\
 \nonumber &=& 2\varphi'\sigma_k^{ii}u_{ii}^2+2\varphi'\sum_lu_l(f_l+f_zu_l+f_{p_l}u_{ll})-2\varphi'\sum_i\sigma_k^{ii}\sum_lu_l^2\\
  &\geq&2\varphi'\sigma_k^{ii}u_{ii}^2-CA\sum_i\sigma_k^{ii}-CA+2\varphi'\sum_lf_{p_l}u_lu_{ll}.
\end{eqnarray}
For the term $\psi'\sigma_k^{ii}u_{ii}$ in \eqref{Qii}, we derive
\begin{equation}\label{au}
\psi'\sigma_k^{ii}u_{ii}=\psi'\sigma_k^{ii}b_{ii}-\psi'u\sum_i\sigma_k^{ii}\geq-C\Lambda+\frac{\Lambda}{C}\sum_i\sigma_k^{ii},
\end{equation}
here we use that $u$ has a positive lower bound.

Substituting \eqref{log}-\eqref{au} into \eqref{Qii}, then
\begin{eqnarray}\label{ste}
 \nonumber0 &\geq& -\frac{\sigma_k^{ij,pq}b_{ij1}b_{pq1}}{\lambda_1}+2\sum_{p>1}\frac{\sigma_k^{ii}\widetilde{b}_{1pi}^2}{\lambda_1(\lambda_1-\widetilde{\lambda}_p)}
  +2\varphi' \sigma_k^{ii}u_{ii}^2-\frac{\sigma_k^{ii}\widetilde{b}_{11i}^2}{\lambda_1^2}\\
  \nonumber&&+\varphi''\sigma_k^{ii}(\nabla_i(|\nabla u|^2))^2+\psi''\sigma_k^{ii}u_i^2+\left(\frac{\Lambda}{C}-CA- \frac{C}{\lambda_1}\right)\sum_i\sigma_k^{ii}\\
  &&-\frac{CKL^2}{\lambda_1}-C\lambda_1-CK\Lambda -CA.
\end{eqnarray}

Since
\begin{eqnarray}\label{fra}
 && -\frac{\sigma_k^{ij,pq}b_{ij1}b_{pq1}}{\lambda_1}+2\sum_{p>1}\frac{\sigma_k^{ii}\widetilde{b}_{1pi}^2}{\lambda_1(\lambda_1-\widetilde{\lambda}_p)}\\
 \nonumber &\geq&-\frac{\sigma_k^{pp,qq}b_{pp1}b_{qq1}}{\lambda_1}-2\sum_{p>1}\frac{\sigma_k^{1p,p1}b_{11p}^2}{\lambda_1}
  +2\sum_{p>1}\frac{\sigma_k^{11}\widetilde{b}_{1p1}^2}{\lambda_1(\lambda_1-\widetilde{\lambda}_p)}
  +2\sum_{p>1}\frac{\sigma_k^{pp}\widetilde{b}_{1pp}^2}{\lambda_1(\lambda_1-\widetilde{\lambda}_p)} \\
 \nonumber &=& -\frac{\sigma_k^{pp,qq}b_{pp1}b_{qq1}}{\lambda_1}+2\sum_{p>1}\frac{\sigma_k^{11,pp}b_{11p}^2}{\lambda_1}
  +2\sum_{p>1}\frac{\sigma_k^{11}\widetilde{b}_{1p1}^2}{\lambda_1(\lambda_1-\widetilde{\lambda}_p)}
  +2\sum_{p>1}\frac{\sigma_k^{pp}\widetilde{b}_{1pp}^2}{\lambda_1(\lambda_1-\widetilde{\lambda}_p)},
\end{eqnarray}
where we use $-\sigma_k^{1p,p1}=\sigma_k^{11,pp}$.

Then inserting \eqref{fra} into \eqref{ste}, we obtain \eqref{step1}.

$\mathbf{Step ~2:}$ Without loss of generality we assume that $\lambda_1\geq1$. Then we claim
\begin{equation*}
  2\sum_{p>1}\frac{\sigma_k^{11,pp}b_{11p}^2}{\lambda_1}+2\sum_{p>1}\frac{\sigma_k^{11}\widetilde{b}_{1p1}^2}{\lambda_1(\lambda_1-\widetilde{\lambda}_p)}
  \geq \sum_{p>1}\frac{\sigma_k^{pp}\widetilde{b}_{11p}^2}{\lambda_1^2}-C\sum_i\sigma_k^{ii}.
\end{equation*}
Define
$$I=\{i\in\{2,3,\cdots,n\}| \lambda_i=\lambda_1\}.$$
Note that $\widetilde{\lambda}_p=\lambda_p-\frac{1}{3}$ and $\lambda_p\geq0$ with $p>1$, then
\begin{equation}\label{wid}
  \lambda_1-\widetilde{\lambda}_p=\lambda_1-\lambda_p+\frac{1}{3}\leq\lambda_1+\frac{1}{3}\leq\frac{4}{3}\lambda_1.
\end{equation}
Combining with \eqref{wid} and the fact  $\frac{\sigma_k^{pp}-\sigma_k^{11}}{\lambda_1-\lambda_p}=\sigma_k^{11,pp}$, we have
\begin{eqnarray}\label{su}
  \nonumber\sum_{p>1}\frac{\sigma_k^{pp}b_{11p}^2}{\lambda_1^2}&=&\sum_{p\in I}\frac{\sigma_k^{pp}b_{11p}^2}{\lambda_1^2}+\sum_{p\notin I}\frac{\sigma_k^{11}b_{11p}^2}{\lambda_1^2}+\sum_{p\notin I}\frac{(\sigma_k^{pp}-\sigma_k^{11})b_{11p}^2}{\lambda_1^2} \\
  \nonumber&\leq&\sum_{p\in I}\frac{4\sigma_k^{11}b_{11p}^2}{3\lambda_1(\lambda_1-\widetilde{\lambda}_p)}+\sum_{p\notin I}\frac{4\sigma_k^{11}b_{11p}^2}{3\lambda_1(\lambda_1-\widetilde{\lambda}_p)}+\sum_{p\notin I}\frac{(\sigma_k^{pp}-\sigma_k^{11})b_{11p}^2}{\lambda_1(\lambda_1-\lambda_p)}\\
  &\leq&\frac{4}{3}\sum_{p>1}\frac{\sigma_k^{11}b_{11p}^2}{\lambda_1(\lambda_1-\widetilde{\lambda}_p)}+\sum_{p>1}\frac{\sigma_k^{11,pp}b_{11p}^2}
  {\lambda_1}.
\end{eqnarray}
Due to \eqref{su}, we get
\begin{eqnarray*}
  \nonumber\sum_{p>1}\frac{\sigma_k^{pp}\widetilde{b}_{11p}^2}{\lambda_1^2}&=&\sum_{p>1}\frac{\sigma_k^{pp}(b_{11p}-T_{11p})^2}{\lambda_1^2}\\
  \nonumber&\leq&\frac{5}{4}\sum_{p>1}\frac{\sigma_k^{pp}b_{1p1}^2}{\lambda_1^2}+C\sum_i\sigma_k^{ii}\\
  \nonumber&\leq&\frac{5}{3}\sum_{p>1}\frac{\sigma_k^{11}b_{1p1}^2}{\lambda_1(\lambda_1-\widetilde{\lambda}_p)}+\frac{5}{4}\sum_{p>1}
  \frac{\sigma_k^{11,pp}b_{11p}^2}{\lambda_1}+C\sum_i\sigma_k^{ii}\\
  \nonumber&=&\frac{5}{3}\sum_{p>1}\frac{\sigma_k^{11}(\widetilde{b}_{1p1}+T_{1p1})^2}{\lambda_1(\lambda_1-\widetilde{\lambda}_p)}+\frac{5}{4}\sum_{p>1}
  \frac{\sigma_k^{11,pp}b_{11p}^2}{\lambda_1}+C\sum_i\sigma_k^{ii}\\
  &\leq&2\sum_{p>1}\frac{\sigma_k^{11}\widetilde{b}_{1p1}^2}{\lambda_1(\lambda_1-\widetilde{\lambda}_p)}+2\sum_{p>1}\frac{\sigma_k^{11,pp}b_{11p}^2}
  {\lambda_1}+C\sum_i\sigma_k^{ii}.
\end{eqnarray*}

$\mathbf{Step ~3:}$ We show that for $\varepsilon, \delta\in(0,\frac{1}{4})$ and $1\leq l\leq k-1$, there exists a constant $\delta'$ depending on $\varepsilon,\delta,n,k,\|f\|_{C^1}$ and $\inf f$ such that if $\lambda_l\geq \delta \lambda_1$, $\lambda_{l+1}\leq \delta'\lambda_1$, then
$$(1-4\varepsilon)\frac{\sigma_k^{11}\widetilde{b}_{111}^2}{\lambda_1^2}\leq-\frac{\sigma_k^{pp,qq}b_{pp1}b_{qq1}}{\lambda_1}
+2\sum_{p>1}\frac{\sigma_k^{pp}\widetilde{b}_{1pp}^2}{\lambda_1(\lambda_1-\widetilde{\lambda}_p)}+CK\lambda_1+C\left(\frac{(1-2\varepsilon)^2}{2\varepsilon \lambda_1^2} +1\right)\sum_i\sigma_k^{ii}.$$

By Lemma 7 in \cite{Guan15}, we derive
\begin{eqnarray*}
 \nonumber &&-\frac{\sigma_k^{pp,qq}b_{pp1}b_{qq1}}{\lambda_1}+\frac{(\sum_p\sigma_k^{pp}b_{pp1})^2}{\lambda_1\sigma_k} \\
 \nonumber &\geq& \frac{\sigma_k}{\lambda_1\sigma_l^2}[(\sum_p\sigma_l^{pp}b_{pp1})^2-\sigma_l\sigma_l^{pp,qq}b_{pp1}b_{qq1}] \\
  &=& \frac{\sigma_k}{\lambda_1\sigma_l^2}[\sum_p(\sigma_l^{pp}b_{pp1})^2+\sum_{p\neq q}(\sigma_l^{pp}\sigma_l^{qq}-\sigma_l\sigma_l^{pp,qq})
  b_{pp1}b_{qq1}].
\end{eqnarray*}
Differentiating \eqref{K-eq} once, we have
$$\sum_p\sigma_k^{pp}b_{pp1}=f_1+f_zu_1+f_{p_1}u_{11}.$$
Then
$$\frac{(\sum_p\sigma_k^{pp}b_{pp1})^2}{\sigma_k\lambda_1}=\frac{(f_1+f_zu_1+f_{p_1}u_{11})^2}{\lambda_1f}\leq CK\lambda_1.$$
Hence
\begin{eqnarray}\label{pp}
 \nonumber &&-\frac{\sigma_k^{pp,qq}b_{pp1}b_{qq1}}{\lambda_1}+CK\lambda_1 \\
  &\geq&\frac{\sigma_k}{\lambda_1\sigma_l^2}[\sum_p(\sigma_l^{pp}b_{pp1})^2+\sum_{p\neq q}(\sigma_l^{pp}\sigma_l^{qq}-\sigma_l\sigma_l^{pp,qq})b_{pp1}b_{qq1}].
\end{eqnarray}
In order to deal with $\sum_{p\neq q}(\sigma_l^{pp}\sigma_l^{qq}-\sigma_l\sigma_l^{pp,qq})b_{pp1}b_{qq1}$, we claim that
\begin{equation}\label{neq}
\sum_{p\neq q}(\sigma_l^{pp}\sigma_l^{qq}-\sigma_l\sigma_l^{pp,qq})b_{pp1}b_{qq1}
   \geq-\varepsilon\sum_{p\leq l}(\sigma_l^{pp}b_{pp1})^2-\frac{C}{\varepsilon}\sum_{p>l}(\sigma_l^{pp}b_{pp1})^2.
\end{equation}
The proof of \eqref{neq} is similar to \cite{Chu21} and we omit here. Then by \eqref{pp}-\eqref{neq}, we get
\begin{eqnarray}\label{rem}
 \nonumber (1-\varepsilon)\frac{\sigma_k(\sigma_l^{11})^2b_{111}^2}{\lambda_1\sigma_l^2} &\leq& (1-\varepsilon)\frac{\sigma_k}{\lambda_1\sigma_l^2}\sum_
  {p\leq l}(\sigma_l^{pp}b_{pp1})^2 \\
  &\leq& \frac{C\sigma_k}{\varepsilon\lambda_1\sigma_l^2}\sum_{p>l}(\sigma_l^{pp}b_{pp1})^2-\frac{\sigma_k^{pp,qq}}{\lambda_1}b_{pp1}b_{qq1}+CK\lambda_1.
\end{eqnarray}
For the term $(1-\varepsilon)\frac{\sigma_k(\sigma_l^{11})^2b_{111}^2}{\lambda_1\sigma_l^2}$ in \eqref{rem}, using $\lambda_{l+1}\leq\delta'\lambda_1$ and $\lambda_i\geq0$ for all $i$, we have
$$\frac{\sigma_k}{\lambda_1\sigma_k^{11}}=\frac{\lambda_1\sigma_k^{11}+\sigma_k(\lambda|1)}{\lambda_1\sigma_k^{11}}\geq1,$$
$$\frac{\lambda_1\sigma_l^{11}}{\sigma_l}=1-\frac{\sigma_l(\lambda|1)}{\sigma_l}\geq1-\frac{C\lambda_2\cdots\lambda_{l+1}}{\lambda_1
\cdots\lambda_l}=1-\frac{C\lambda_{l+1}}{\lambda_1}\geq1-C\delta'.$$
Thus  choosing $\delta'\leq\frac{1}{C}\left(1-\sqrt{\frac{1-2\varepsilon}{1-\varepsilon}}\right)$,
\begin{eqnarray}\label{var}
  \nonumber(1-\varepsilon)\frac{\sigma_k(\sigma_l^{11})^2b_{111}^2}{\lambda_1\sigma_l^2}
  &=&(1-\varepsilon)\frac{\sigma_k^{11}b_{111}^2}{\lambda_1^2}\frac{\sigma_k}{\lambda_1\sigma_k^{11}}\left(\frac{\lambda_1\sigma_l^{11}}{\sigma_l}\right)^2\\
 \nonumber &\geq&(1-\varepsilon)(1-C\delta')^2\frac{\sigma_k^{11}b_{111}^2}{\lambda_1^2}\\
 \nonumber &\geq&(1-2\varepsilon)\frac{\sigma_k^{11}(\widetilde{b}_{111}+T_{111})^2}{\lambda_1^2}\\
  &\geq& (1-4\varepsilon)\frac{\sigma_k^{11}\widetilde{b}_{111}^2}{\lambda_1^2}-C\frac{(1-2\varepsilon)^2}{2\varepsilon\lambda_1^2}\sum_i\sigma_k^{ii}.
\end{eqnarray}
For the term $\frac{C\sigma_k}{\varepsilon\lambda_1\sigma_l^2}\sum_{p>l}(\sigma_l^{pp}b_{pp1})^2$ in \eqref{rem}, by $\lambda_l\geq\delta\lambda_1$ and $\lambda_i\geq0$ for all $i$, we get
$$\frac{\sigma_l^{pp}}{\sigma_l}\leq\frac{C\lambda_1\cdots\lambda_{l-1}}{\lambda_1\cdots\lambda_l}\leq\frac{C}{\lambda_l}
\leq\frac{C}{\delta\lambda_1},$$
which implies
\begin{eqnarray*}
  \nonumber\frac{C\sigma_k}{\varepsilon\lambda_1\sigma_l^2}\sum_{p>l}(\sigma_l^{pp}b_{pp1})^2 &=& \frac{C}{\varepsilon}\sum_{p>l}\left(\frac{\sigma_l^{pp}}{\sigma_l}\right)^2\frac{\sigma_kb_{pp1}^2}{\lambda_1} \\
  &\leq&\frac{C}{\varepsilon\delta^2}\sum_{p>l}\left(\frac{\sigma_k}{\lambda_1}\right)\left(\frac{b_{pp1}}{\lambda_1}\right)^2.
\end{eqnarray*}
For the term $\frac{\sigma_k}{\lambda_1}$, we know that
$$\frac{\sigma_k}{\lambda_1}\leq\frac{\delta'\sigma_k}{\lambda_p}\leq \frac{C\delta'\lambda_1\cdots\lambda_k}{\lambda_p}\leq C\delta'\sigma_k^{pp},$$
for $l<p\leq k$ and
$$\frac{\sigma_k}{\lambda_1}\leq\frac{\delta'\sigma_k}{\lambda_k}\leq C\delta'\lambda_1\cdots\lambda_{k-1}\leq C\delta'\sigma_k^{pp},$$
for $p>k\geq l+1$. Choosing $\delta'\leq\frac{\varepsilon\delta^2}{C}$, by \eqref{wid} we derive
\begin{eqnarray}\label{kpp}
 \nonumber \frac{C\sigma_k}{\varepsilon\lambda_1\sigma_l^2}\sum_{p>l}(\sigma_l^{pp}b_{pp1})^2&\leq&\frac{C\delta'}{\varepsilon\delta^2}
 \sum_{p>l}\frac{\sigma_k^{pp}b_{1pp}^2}{\lambda_1^2}\\
  \nonumber&\leq&\sum_{p>l}\frac{\sigma_k^{pp}(\widetilde{b}_{1pp}+T_{1pp})^2}{\lambda_1^2}\\
  \nonumber&\leq&\frac{3}{2}\sum_{p>l}\frac{\sigma_k^{pp}\widetilde{b}_{1pp}^2}{\lambda_1^2}+\frac{C}{\lambda_1^2}\sum_i\sigma_k^{ii}\\
  &\leq&2\sum_{p>1}\frac{\sigma_k^{pp}\widetilde{b}_{1pp}^2}{\lambda_1(\lambda_1-\widetilde{\lambda}_p)}+\frac{C}{\lambda_1^2}\sum_i\sigma_k^{ii}.
\end{eqnarray}
Combining with \eqref{rem}-\eqref{kpp}, we obtain
\begin{equation*}
  (1-4\varepsilon)\frac{\sigma_k^{11}\widetilde{b}_{111}^2}{\lambda_1^2}
  \leq-\frac{\sigma_k^{pp,qq}}{\lambda_1}b_{pp1}b_{qq1}+2\sum_{p>1}\frac{\sigma_k^{pp}\widetilde{b}_{1pp}^2}{\lambda_1(\lambda_1-\widetilde{\lambda}_p)} +CK\lambda_1+C \left(\frac{(1-2\varepsilon)^2}{2\varepsilon \lambda_1^2} +1\right)\sum_i\sigma_k^{ii}.
\end{equation*}

$\mathbf{Step ~4:}$
We need to prove the following lemma.

\begin{lemma}\label{lem}
For any $\delta\in(0, \frac{1}{4})$ and $1\leq l\leq k-1$, there exist constants $\delta'$ and C depending on $\delta, n, k, \inf u, \inf f, \|u\|_{C^1}$ and $\|f\|_{C^2}$ such that if $\lambda_l\geq \delta\lambda_1$ and $\lambda_{l+1} \leq \delta' \lambda_1$, then
$$\lambda_1 \leq C.$$
\end{lemma}

\emph{Proof of Lemma \ref{lem}.}
Assume that $A\leq \Lambda<B$. According to \eqref{Qi} we get
\begin{eqnarray}\label{cont}
 \nonumber \varphi''\sigma_k^{ii}(\nabla_i|\nabla u|^2)^2+\psi''\sigma_k^{ii}u_i^2&=& \frac{1}{A}\sigma_k^{ii}(\varphi'\nabla_i|\nabla u|^2)^2+ \frac{1}{B}\sigma_k^{ii}(\psi'u_i)^2 \\
 \nonumber &\geq& \frac{1}{B} \left( \sigma_k^{ii}(\frac{\widetilde{b}_{11i}}{\lambda_1}+\psi'u_i)^2+\sigma_k^{ii}(\psi'u_i)^2 \right)\\
  &\geq&\frac{1}{2B}\frac{\sigma_k^{ii}\widetilde{b}_{11i}^2}{\lambda_1^2}.
\end{eqnarray}
By Step 1- Step 3, \eqref{cont} and the fact that $\lambda_1=b_{11}\leq C\sigma_k^{11}b_{11}^2\leq C\sigma_k^{ii}b_{ii}^2$, we derive
\begin{eqnarray*}
   0&\geq& \left(\frac{1}{2B}-4\varepsilon\right)\frac{\sigma_k^{11}\widetilde{b}_{111}^2}{\lambda_1^2}+\left(\frac{\Lambda}{C}-CAL^2-\frac{C}{\lambda_1}
   -\frac{C(1-2\varepsilon)^2}{2\varepsilon\lambda_1^2}\right)\sum_i\sigma_k^{ii}\\
   &&+\left(\frac{A}{CK}-CK\right)\lambda_1-CKL^2-CK\Lambda-CA.
\end{eqnarray*}
Hence choosing $B>\Lambda\gg A\gg1, \varepsilon\ll 1$ and $\lambda_1$ large enough,
we have
$$\lambda_1\leq CKL^2+CK\Lambda+CA,$$
the Lemma \ref{lem}
is finished.

Then we continue to complete the proof of Theorem \ref{c2-main}.
Set $\delta_1=\frac{1}{5}$, by Lemma \ref{lem} there exists $\delta_2$ such that if $\lambda_2\leq\delta_2\lambda_1$, then $\lambda_1\leq C$. If $\lambda_2>\delta_2\lambda_1$, using Lemma \ref{lem} again, there exists $\delta_3$ such that if $\lambda_3\leq\delta_3\lambda_1$, then $\lambda_1\leq C$. Repeating the above argument, we get $\lambda_1\leq C$ or $\lambda_k>\delta_k\lambda_1$. In the latter case, since $\lambda_1\geq\lambda_2\geq\cdots\geq\lambda_k>\delta_k\lambda_1$ and $\lambda_i\geq0$ for all $i$, then
$$\delta_k^k\lambda_1^k<\lambda_1\cdots\lambda_k\leq\sigma_k=f\leq C,$$
which also implies $\lambda_1\leq C$. Therefore we derive $\max_{\mathbb{S}^n}|\nabla^2u|\leq C$.
\end{proof}

\subsection{Existence and uniqueness}
 Based on the a priori estimates and constant rank theorem, we can establish the existence and uniqueness for strictly spherical convex solutions of equation \eqref{G-eq} by the continuity method. We will divide into two steps to complete the proof of Theorem \ref{exist-main} for the case $p>q$.

\textbf{Step 1:} Existence:
 As in \cite{GX18}, we consider the following equation
 \begin{equation}\label{kij}
   \sigma_k(u_{ij}+u\delta_{ij})=u^{p-1}(u^2+|\nabla u|^2)^{\frac{k+1-q}{2}}\varphi_t,\quad \forall~ 0 \leq t\leq 1,
 \end{equation}
where $\varphi_t=\left((1-t)(C_n^k)^{-\frac{1}{p-1+k}}+t\varphi^{-\frac{1}{p-1+k}}\right)^{-(p-1+k)}$.

Denote
$$S=\{t\in [0,1]|\mbox{equation}~\eqref{kij} ~\mbox{has a positive strictly spherical convex solution} ~u_t\}.$$
When $t=0$, we have $\varphi_0=C_n^k$ and it is clear that $u_0\equiv1$ is a positive strictly spherical convex solution of equation \eqref{kij}. Thus $S$ is non-empty.

Next we prove $S$ is open.
Since equation \eqref{kij} can be expressed as
$$\sigma_k^{\frac{1}{k}}=u^{\frac{p-1}{k}}(u^2+|\nabla u|^2)^{\frac{k+1-q}{2k}}\varphi_t^{\frac{1}{k}}.$$
Denote $\widetilde{F}=\sigma_k^{\frac{1}{k}}$, $\widetilde{F}^{ij}=\frac{\partial \widetilde{F}}{\partial b_{ij}}$ and $b_{ij}=u_{ij}+u\delta_{ij}$,
then the linearized operator is given by
\begin{eqnarray*}
  L_u(v)&=&\widetilde{F}^{ij}(v_{ij}+v\delta_{ij})-\frac{p-1}{k}u^{\frac{p-1}{k}-1}(u^2+|\nabla u|^2)^{\frac{k+1-q}{2k}}\varphi_t^{\frac{1}{k}}v\\
  &&-\frac{k+1-q}{k}u^{\frac{p-1}{k}+1}(u^2+|\nabla u|^2)^{\frac{k+1-q}{2k}-1}\varphi_t^{\frac{1}{k}}v\\
  &&-\frac{k+1-q}{k}u^{\frac{p-1}{k}}(u^2+|\nabla u|^2)^{\frac{k+1-q}{2k}-1}\varphi_t^{\frac{1}{k}}\sum_ku_kv_k.
\end{eqnarray*}
Let $v=uw$, we get
\begin{eqnarray*}
  L_u(v)&=&\widetilde{F}^{ij}(u_{ij}+u\delta_{ij})w+2\widetilde{F}^{ij}u_iw_j+u\widetilde{F}^{ij}w_{ij}\\
&&-wu^{\frac{p-1}{k}}(u^2+|\nabla u|^2)^{\frac{k+1-q}{2k}}\varphi_t^{\frac{1}{k}}\bigg(\frac{p-1}{k}+\frac{k+1-q}{k}\frac{u^2}{u^2+|\nabla u|^2}\\
&&+\frac{k+1-q}{k}\frac{|\nabla u|^2}{u^2+|\nabla u|^2}+\frac{k+1-q}{k}\frac{u\sum_ku_kw_k}{w\left(u^2+|\nabla u|^2\right)}\bigg)\\
&=&-wu^{\frac{p-1}{k}}(u^2+|\nabla u|^2)^{\frac{k+1-q}{2k}}\varphi_t^{\frac{1}{k}}\left(\frac{p-q}{k}+\frac{k+1-q}{k}\frac{u\sum_ku_kw_k}{w\left(u^2+|\nabla u|^2\right)}\right)\\
&&+2\widetilde{F}^{ij}u_iw_j+u\widetilde{F}^{ij}w_{ij}.
\end{eqnarray*}
Assume that $w$ attains its maximum at $x_0$, thus $w_i(x_0)=0$ and $w_{ij}(x_0)\leq0$. If $L_u(v)=0$, then at $x_0$,
$$0\leq -\frac{p-q}{k}wu^{\frac{p-1}{k}}(u^2+|\nabla u|^2)^{\frac{k+1-q}{2k}}\varphi_t^{\frac{1}{k}}.$$
 By the condition $p>q$, we have $\max_{\mathbb{S}^n}w\leq0$. Similarly, we can also derive $\min_{\mathbb{S}^n}w\geq0$.  Hence $w=0$, which implies $v=0$. Therefore $\emph{Ker}L_u=\{0\}$, i.e., the linearized operator $L_u$ is invertible. By the implicit function theorem, for each $t_0\in S$, there exists a neighborhood $\mathcal{N}$ of $t_0$ such that there exists a positive strictly spherical convex solution $u_t$ of equation \eqref{kij} for $t\in \mathcal{N}$. Hence $\mathcal{N}\subset S$ and $S$ is open.

We now prove $S$ is closed. Let $\{t_i\}_{i=1}^{\infty}\subset S$ be a sequence such that $t_i\rightarrow t_0$ and $u_{t_i}$ be a positive strictly spherical convex solution of equation \eqref{kij} for $t=t_i$. Based on the a priori estimates, Evans-Krylov and Schauder theory, we can get higher order estimates. Then there exists a subsequence still denote by $u_{t_i}$ converges to some function $u$, and $u$ is a positive solution of equation \eqref{kij} for $t=t_0$. Suppose $(u_{ij}+u\delta_{ij})$ is not positive definite, then $(u_{ij}+u\delta_{ij})$ is semi-positive definite. Since $\varphi$ satisfies Assumption \ref{cdt-091}, it is easy to verify that $\varphi_{t_0}$ also satisfies Assumption \ref{cdt-091}, then by constant rank theorem, $(u_{ij}+u\delta_{ij})$ must be positive definite, which implies a contradiction. Therefore $t_0\in S$ and $S$ is closed.

We conclude that $S=[0,1]$ and equation \eqref{kij} with $t=1$, which is equation \eqref{G-eq} has a positive strictly spherical convex solution.

\textbf{Step 2: } Uniqueness:
Let $u, \overline{u}$ be two admissible solutions of equation \eqref{G-eq}. Suppose $P=\frac{u}{\overline{u}}$ attains its maximum at $x_0\in\mathbb{S}^n$, then at $x_0$,
 $$0=\nabla\log P=\frac{\nabla u}{u}-\frac{\nabla\overline{u}}{\overline{u}},$$
 and
 \begin{eqnarray*}
   0 &\geq& \nabla^2\log P \\
   &=&\frac{\nabla^2u}{u}-\left(\frac{\nabla u}{u}\right)^2-\frac{\nabla^2\overline{u}}{\overline{u}}+\left(\frac{\nabla \overline{u}}{\overline{u}}\right)^2\\
   &=&\frac{\nabla^2u}{u}-\frac{\nabla^2\overline{u}}{\overline{u}}.
 \end{eqnarray*}
  So $\frac{\nabla^2u+uI}{u}\leq\frac{\nabla^2\overline{u}+\overline{u}I}{\overline{u}}$. Then $\sigma_k(\frac{\nabla^2u+uI}{u})\leq \sigma_k(\frac{\nabla^2\overline{u}+\overline{u}I}{\overline{u}})$. Therefore,
 \begin{eqnarray*}
   1 &=& \frac{\varphi(x_0)}{\varphi(x_0)}=\frac{(u^2+|\nabla u|^2)^{-\frac{k+1-q}{2}}u^{1-p}\sigma_k(\nabla^2u+uI)}{(\overline{u}^2+|\nabla \overline{u}|^2)^{-\frac{k+1-q}{2}}\overline{u}^{1-p}\sigma_k(\nabla^2\overline{u}+\overline{u}I)}(x_0) \\
   &\leq& \frac{(u^2+|\nabla u|^2)^{-\frac{k+1-q}{2}}u^{k-p+1}}{(\overline{u}^2+|\nabla \overline{u}|^2)^{-\frac{k+1-q}{2}}\overline{u}^{k-p+1}}(x_0)\leq P(x_0)^{q-p}.
 \end{eqnarray*}
 We have $\max_{\mathbb{S}^n}P=P(x_0)\leq 1$ with $p>q$. The treatment for $\min_{\mathbb{S}^n}P\geq 1$ is similar. To sum up, we get $u\equiv\overline{u}$.

\section{The case $p=q>1$}
In this section, we consider equation \eqref{G-eq} for the  case $p=q$. Inspired by  the technique in \cite{GL99, HMS04, CX22}, we  outline the arguments here with necessary modifications.
We  study the following equation
\begin{equation}\label{e1}
 \sigma_k(\nabla^2u+uI)=u^{p-1+\varepsilon}(u^2+|\nabla u|^2)^{\frac{k+1-p}{2}}\varphi(x), \quad on~ \mathbb{S}^{n},
\end{equation}
for any small $\varepsilon>0$.
\subsection{The a priori estimates}
Following the proof of Theorem \ref{c0}, we can easily derive $C^0$ estimates for equation \eqref{e1}.
\begin{theorem}\label{C0}
Suppose $\varphi$ is a positive smooth function and $u\in C^2(\mathbb{S}^n)$ is a positive admissible  solution of equation \eqref{e1}. Then for $p=q$,
$$\frac{C_n^k}{\max_{\mathbb{S}^n} \varphi}\leq u(x)^{\varepsilon}\leq\frac{C_n^k}{\min_{\mathbb{S}^n} \varphi}, \quad \forall~x\in \mathbb{S}^n.$$
\end{theorem}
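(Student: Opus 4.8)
The plan is to mimic the proof of Theorem \ref{c0} verbatim, since equation \eqref{e1} is obtained from \eqref{G-eq} by replacing the exponent $p-1$ by $p-1+\varepsilon$ and setting $q=p$. First I would evaluate the equation at an interior minimum point $x_0$ of $u$ on $\mathbb{S}^n$. At such a point $\nabla u(x_0)=0$ and $\nabla^2 u(x_0)\geq 0$, hence $b_{ij}=u_{ij}+u\delta_{ij}\geq u\delta_{ij}$, and since $u$ is admissible the eigenvalues lie in $\Gamma_k$, so monotonicity of $\sigma_k$ on $\Gamma_k$ gives $\sigma_k(\nabla^2 u+uI)(x_0)\geq C_n^k u(x_0)^k$. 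Plugging into \eqref{e1} with $|\nabla u|(x_0)=0$ and $q=p$ yields $\varphi(x_0)\,u(x_0)^{p-1+\varepsilon}\,u(x_0)^{k+1-p}\geq C_n^k u(x_0)^k$, i.e. $\varphi(x_0)\,u(x_0)^{k+\varepsilon}\geq C_n^k u(x_0)^k$, so $u(x_0)^{\varepsilon}\geq C_n^k/\varphi(x_0)\geq C_n^k/\max_{\mathbb{S}^n}\varphi$. Since $u(x_0)=\min_{\mathbb{S}^n}u$, this gives the lower bound at every point.

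For the upper bound I would evaluate \eqref{e1} at an interior maximum point $x_1$ of $u$, where $\nabla u(x_1)=0$ and $\nabla^2 u(x_1)\leq 0$, hence $b_{ij}(x_1)\leq u(x_1)\delta_{ij}$. Here I should note that the eigenvalues $\lambda_i$ of $b_{ij}(x_1)$ are nonnegative (they lie in $\Gamma_k$ and are bounded above by $u(x_1)>0$, and for $k<n$ one still gets $\sigma_k\leq C_n^k u^k$ by the elementary inequality $\sigma_k(\lambda)\leq C_n^k(\max_i\lambda_i)^k$ valid for $\lambda\in\Gamma_k$ with $\lambda_i\le u$; alternatively one invokes that an admissible maximum point forces $0\le\lambda_i\le u$). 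Thus $\sigma_k(\nabla^2 u+uI)(x_1)\leq C_n^k u(x_1)^k$, and the same substitution with $q=p$ gives $\varphi(x_1)u(x_1)^{k+\varepsilon}\leq C_n^k u(x_1)^k$, so $u(x_1)^{\varepsilon}\leq C_n^k/\varphi(x_1)\leq C_n^k/\min_{\mathbb{S}^n}\varphi$; since $u(x_1)=\max_{\mathbb{S}^n}u$, this is the desired upper bound everywhere.

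The argument is essentially routine and no serious obstacle is expected; the only point requiring a little care is the upper estimate, where one must justify $\sigma_k(b_{ij})(x_1)\le C_n^k u(x_1)^k$ when $1\le k<n$—this follows because at an admissible maximum point the principal radii satisfy $0\le\lambda_i\le u$, so each of the $C_n^k$ terms in $\sigma_k$ is at most $u^k$. Everything else is a direct computation with the exponents: the key simplification $p-1+\varepsilon+(k+1-p)=k+\varepsilon$ is exactly what produces the clean power $u^{\varepsilon}$ in the statement, and the bounds are then inherited from $\min_{\mathbb{S}^n}\varphi$ and $\max_{\mathbb{S}^n}\varphi$ just as in Theorem \ref{c0}.
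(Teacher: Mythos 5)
Your proposal is correct and follows essentially the same argument as the paper: evaluate equation \eqref{e1} at the minimum and maximum points of $u$, use $\nabla u=0$ and $\nabla^2u+uI\gtrless uI$ there, and exploit the exponent cancellation $p-1+\varepsilon+(k+1-p)=k+\varepsilon$, exactly as in the proof of Theorem \ref{c0}. Your extra remark on the upper bound is welcome but note the correct justification is the Maclaurin-type inequality (Proposition \ref{NM}), since admissibility alone does not force $\lambda_i\geq 0$ when $k<n$; with that reading your argument matches the paper's.
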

\begin{proof}
Assume that $\min_{\mathbb{S}^n}u(x)$ is attained at $x_0$, then at $x_0$ we get
$$|\nabla u|=0, \quad \nabla^2 u\geq 0.$$
Hence $\nabla^2u+uI\geq uI$, i.e., $b_{ii}\geq u$ for $i=1,2,\cdots,n$. Thus
$$\varphi(u^2+|\nabla u|^2)^{\frac{k+1-p}{2}}u^{p-1+\varepsilon}=\sigma_k(\nabla^2u+uI)\geq C_n^ku^k.$$
Therefore
$$u(x_0)^{\varepsilon}\geq\frac{C_n^k}{\varphi(x_0)}\geq\frac{C_n^k}{\max_{\mathbb{S}^n}\varphi}.$$
Similarly, assume that $\max_{\mathbb{S}^n}u(x)$ is attained at $x_1$, we can also get $u(x_1)^{\varepsilon}\leq\frac{C_n^k}{\min_{\mathbb{S}^n}\varphi}$, the proof is completed.
\end{proof}

Based on $C^0$ estimates, we can obtain the following $C^1$ estimates.
\begin{theorem}\label{c1}
Suppose $\varphi$ is a positive smooth function and $u\in C^2(\mathbb{S}^n)$ is a positive admissible solution of equation \eqref{e1}. Then for $p=q>1$ there is a positive constant $C$ depending on $n, k, \min_{\mathbb{S}^n} \varphi$ and $\|\varphi\|_{C^1}$, but independent of $\varepsilon$ such that
\begin{equation}\label{e2}
  \frac{|\nabla u(x)|}{u(x)}\leq C, \quad \forall~x\in \mathbb{S}^n,
\end{equation}
and
\begin{equation}\label{e3}
  \frac{\max_{\mathbb{S}^n}u(x)}{\min_{\mathbb{S}^n}u(x)}\leq C.
\end{equation}
\end{theorem}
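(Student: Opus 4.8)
The plan is to establish first the scale-invariant gradient estimate \eqref{e2} by a maximum principle applied to the test function $P:=|\nabla u|^2/u^2$ on $\mathbb{S}^n$ (equivalently $\log(u^2+|\nabla u|^2)-2\log u$), and then to deduce the Harnack-type bound \eqref{e3} from \eqref{e2} by integration along a geodesic. Throughout I would use the $C^0$ bound of Theorem \ref{C0}, the crucial point being that it pinches $u^\varepsilon$ between the two $\varepsilon$-independent constants $C_n^k/\max_{\mathbb{S}^n}\varphi$ and $C_n^k/\min_{\mathbb{S}^n}\varphi$, so that $u^{\varepsilon/k}$ is bounded above and below uniformly in $\varepsilon$.

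For \eqref{e2}, let $x_0$ be a maximum point of $P$; we may assume $\nabla u(x_0)\neq0$. Rotate the orthonormal frame at $x_0$ so that $\nabla u$ points along $e_1$ and $b_{ij}=u_{ij}+u\delta_{ij}$ is diagonal there; the first-order condition $\nabla P(x_0)=0$ then forces $u_{1i}(x_0)=\delta_{1i}uP$, so $\{u_{ij}\}$ is diagonal at $x_0$ with $u_{11}=uP$. I would compute $\sigma_k^{ij}\nabla_{ij}P$ at $x_0$, commuting the third covariant derivatives of $u$ by the Ricci identity on $\mathbb{S}^n$ (whose curvature contributes a term that cancels a spurious $|\nabla u|^2\sum_i\sigma_k^{ii}$), and replacing $\sigma_k^{ij}\nabla_l b_{ij}=\nabla_l\big(u^{p-1+\varepsilon}(u^2+|\nabla u|^2)^{\frac{k+1-p}{2}}\varphi\big)$ together with the homogeneity identity $\sum_i b_{ii}\sigma_k^{ii}=k\sigma_k(b)$. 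Since $p=q$, the exponents of the right-hand side of \eqref{e1} satisfy $(p-1+\varepsilon)+(k+1-p)=k+\varepsilon$, so the $p$-dependence cancels and one is left, at $x_0$, with
\begin{equation*}
0\ \geq\ \sigma_k^{ij}\nabla_{ij}P\ =\ 2u^{-2}\sum_{i\geq2}\sigma_k^{ii}u_{ii}^2\ +\ 2\varepsilon u^{-1}P\,\sigma_k(b)\ +\ 2u^{-2}\frac{u_l\varphi_l}{\varphi}\,\sigma_k(b)\ +\ 2P\sum_{i\geq2}\sigma_k^{ii},
\end{equation*}
in which only the third term can be negative.

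To close \eqref{e2} I would absorb that term. Estimating $|u_l\varphi_l|\leq|\nabla u|\,\|\varphi\|_{C^1}$ and rewriting in terms of $F:=\sigma_k^{1/k}(b)$ and $F^{ii}$ (so $\sigma_k(b)=F^k$, $\sigma_k^{ii}=kF^{k-1}F^{ii}$, $|\nabla u|=u\sqrt P$), the inequality becomes $P\sum_{i\geq2}F^{ii}\leq C\sqrt P\,(F/u)$ with $C$ depending only on $n,k,\min\varphi,\|\varphi\|_{C^1}$. Now two $\varepsilon$-uniform facts: first, $\sum_i F^{ii}\geq(C_n^k)^{1/k}$ by Proposition \ref{sigma}(4), while $\sigma_k(b)\geq b_{11}\sigma_{k-1}(b|1)$ with $b_{11}=u(1+P)$ gives $F^{11}\leq F/(ku(1+P))$; second, $F/u=u^{\varepsilon/k}(1+P)^{\frac{k+1-p}{2k}}\varphi^{1/k}$, with $u^{\varepsilon/k}$ and $\varphi^{1/k}$ bounded $\varepsilon$-uniformly by Theorem \ref{C0}. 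Combining these, $F^{11}\leq C(1+P)^{-\frac{p+k-1}{2k}}\to0$ as $P\to\infty$ since $p>1$, so $\sum_{i\geq2}F^{ii}\geq\tfrac12(C_n^k)^{1/k}$ once $P$ is large, whence $\tfrac12(C_n^k)^{1/k}P\leq C\sqrt P\,(1+P)^{\frac{k+1-p}{2k}}$, i.e. $P^{\frac{p-1}{2k}}\leq C'$; as $p>1$ this yields $P(x_0)\leq C$ with $C$ independent of $\varepsilon$, which is \eqref{e2}. Then \eqref{e3} is immediate: along a minimizing geodesic $\gamma$ on $\mathbb{S}^n$ from a minimum point of $u$ to a maximum point, $\big|\tfrac{d}{dt}\log u(\gamma(t))\big|\leq|\nabla u|/u\leq C$ and $\mathrm{length}(\gamma)\leq\pi$, so $\log\big(\max_{\mathbb{S}^n}u/\min_{\mathbb{S}^n}u\big)\leq\pi C$.

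The main difficulty is the second-order computation of $\sigma_k^{ij}\nabla_{ij}P$ and its bookkeeping: because the right-hand side of \eqref{e1} contains $|\nabla u|$ through the factor $(u^2+|\nabla u|^2)^{\frac{k+1-p}{2}}$, differentiating the equation (as the test-function computation requires) produces extra second-order terms that must be tracked and shown either to cancel or to carry a favorable sign. The decisive structural point, which makes the estimate $\varepsilon$-uniform, is that after these cancellations every surviving term except the lower-order $\varphi$-gradient term is nonnegative, so that it can be absorbed using the $\varepsilon$-uniform lower bound on $\sum_{i\geq2}F^{ii}$; and that lower bound is exactly where the hypotheses $p>1$ and the $\varepsilon$-uniform $C^0$ bound of Theorem \ref{C0} enter.
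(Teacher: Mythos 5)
Your overall strategy is the same as the paper's: the paper sets $v=\log u$ and applies the maximum principle to $P=|\nabla v|^2=|\nabla u|^2/u^2$, uses the first-order condition to diagonalize the Hessian at the maximum point, differentiates the equation once (where, exactly as you observe, the $p$-dependence cancels because $p=q$, leaving only the harmless $\varepsilon$-term and the $\varphi$-gradient term), and closes the estimate using the $\varepsilon$-uniform bound on $u^{\varepsilon}$ from Theorem \ref{C0} and $p>1$; your displayed identity for $\sigma_k^{ij}\nabla_{ij}P$ at $x_0$ is in fact correct (the term $-2P^2\sigma_k^{11}$ produced by the cross terms is exactly cancelled by the $i=1$ part of $2u^{-2}\sigma_k^{ii}u_{ii}^2$), and the Harnack bound \eqref{e3} by integrating $|\nabla\log u|$ along a geodesic is fine.

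The one step where you diverge from the paper contains a genuine gap relative to the stated hypotheses. You bound $\sum_{i\geq2}F^{ii}$ from below by combining $\sum_iF^{ii}\geq(C_n^k)^{1/k}$ with the claim $F^{11}\leq F/(k\,b_{11})$, i.e. $b_{11}\sigma_{k-1}(b|1)\leq\sigma_k(b)$, which is equivalent to $\sigma_k(b|1)\geq0$. This is true when $(b_{ij})\geq0$ (spherical convexity), but the theorem is stated for \emph{admissible} solutions, i.e. $\lambda(b)\in\Gamma_k$ only, and there it can fail: for $n=3$, $k=2$, $\lambda=(5,1,-\tfrac12)\in\Gamma_2$ one has $\sigma_2(\lambda|1)=-\tfrac12<0$ and indeed $\lambda_1\sigma_1(\lambda|1)=2.5>\sigma_2=2$; more drastically, for $\lambda=(M,M,-c)$ with $c$ close to $M/2$ the quantity $F^{11}$ is large even though $\lambda_1$ is huge, so your conclusion that $F^{11}\to0$ as $P\to\infty$ is not available for merely admissible solutions. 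So as written your argument proves the theorem only for spherical convex solutions (which is what is actually used later in the paper, and, as a side benefit, your variant also works when $k=n$), but not for the admissible class in the statement. The paper avoids this by estimating instead
$\sum_{i\geq2}\sigma_k^{ii}=(n-k+1)\sigma_{k-1}(\lambda)-\sigma_{k-1}(\lambda|1)\geq(n-k)\sigma_{k-1}(\lambda)\geq c(n,k)\,\sigma_k^{\frac{k-1}{k}}$
(using Proposition \ref{sigma} and Newton--MacLaurin), which needs only admissibility; replacing your $F^{11}$ step by this bound (or explicitly restricting to convex solutions) closes the gap and then the rest of your absorption argument, giving $P^{\frac{p-1}{2k}}\leq C$ and hence \eqref{e2}, goes through with constants independent of $\varepsilon$.
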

\begin{proof}
Let $v=\log u$, then equation \eqref{e1} becomes
\begin{equation}\label{e4}
  \sigma_k(v_iv_j+v_{ij}+\delta_{ij})=e^{\varepsilon v}(1+|\nabla v|^2)^{\frac{k+1-p}{2}}\varphi(x).
\end{equation}
Consider the test function $P=|\nabla v|^2$. Suppose $x_0$ is a maximum point of $P$ and we can choose a local orthonormal frame field such that at $x_0$
$$v_1=|\nabla v|>0, \quad \{v_{ij}\}_{2\leq i,j\leq n} \quad\mbox{is diagonal}.$$
Hence we have
\begin{equation}\label{e5}
  0=P_i=2v_lv_{li}=2v_1v_{1i},
\end{equation}
which means $v_{1i}=0$ for $i=1,2,\cdots,n$. So we get $\{v_{ij}\}_{1\leq i,j \leq n}$ is diagonal, therefore
$$\{a_{ij}\}:= v_{ij}+v_iv_j+\delta_{ij}=diag\{1+v_1^2, 1+v_{22}, \cdots, 1+v_{nn}\}:=diag\{\lambda_1,\cdots,\lambda_n\}.$$

In the following our calculations will be done at $x_0$. Denote $\sigma_k^{ij}=\frac{\partial\sigma_k(a_{ij})}{\partial a_{ij}}$, then
\begin{equation}\label{e6}
  0\geq\frac{1}{2}\sigma_k^{ii}P_{ii}=\sigma_k^{ii}v_{ii}^2+\sigma_k^{ii}v_lv_{lii}\geq v_1\sigma_k^{ii}v_{1ii}.
\end{equation}
By Ricci identity, we derive
\begin{equation}\label{e7}
  v_{jii}=v_{iji}=v_{iij}+v_sR_{siji}=v_{iij}+v_s(\delta_{sj}\delta_{ii}-\delta_{si}\delta_{ij})=v_{iij}+v_j-v_i\delta_{ij}.
\end{equation}
Putting \eqref{e7} into \eqref{e6} and differentiating equation \eqref{e1} once, we get
\begin{eqnarray}\label{e8}
  \nonumber0 &\geq&v_1\sigma_k^{ii}v_{ii1}+v_1^2\sum_{i=2}^n\sigma_k^{ii}\\
 \nonumber &=&e^{\varepsilon v}(1+v_1^2)^{\frac{k+1-p}{2}}(\varepsilon v_1^2\varphi+v_1\varphi_1)+v_1^2\sum_{i=2}^n\sigma_k^{ii}\\
  &\geq&e^{\varepsilon v}(1+v_1^2)^{\frac{k+1-p}{2}}v_1\varphi_1+v_1^2\sum_{i=2}^n\sigma_k^{ii}.
\end{eqnarray}
By Proposition \ref{sigma} and \ref{NM}, we have
$$\sum_{i=2}^n\sigma_k^{ii}=\sum_{i=2}^n\sigma_{k-1}(\lambda|i)=(n-k+1)\sigma_{k-1}-\sigma_{k-1}(\lambda|1)\geq(n-k)\sigma_{k-1}(\lambda),$$
and
$$\sigma_{k-1}(\lambda)\geq C_n^{k-1}\left(\frac{\sigma_k}{C_n^k}\right)^{\frac{k-1}{k}}=C_n^{k-1}(C_n^k)^{-\frac{k-1}{k}}[e^{\varepsilon v}(1+v_1^2)^{\frac{k+1-p}{2}}\varphi(x)]^{\frac{k-1}{k}}.$$
Thus \eqref{e8} implies
$$0\geq\frac{\varphi_1}{\varphi}+c(n,k)v_1[e^{\varepsilon v}(1+v_1^2)^{\frac{k+1-p}{2}}\varphi(x)]^{-\frac{1}{k}},$$
where $c(n,k)$ is a constant depending only on $n, k$. By Theorem \ref{C0}, we know that $|e^{\varepsilon v}|=|u^{\varepsilon}|$ has a uniform bound independent of $\varepsilon$. Hence there exists a positive constant $C$ depending on $n, k, \min_{\mathbb{S}^n} \varphi$ and $\|\varphi\|_{C^1}$, but independent of $\varepsilon$ such that for $p=q>1$, $v_1\leq C$, i.e.,
$$\frac{\nabla u}{u}\leq C,$$
then \eqref{e2} is proved. Assume that $u(x)$ attains its maximum and minimum at $x_1, x_2$, respectively. Then by \eqref{e2}, we have
\begin{eqnarray*}
  \log\frac{\max_{\mathbb{S}^n}u(x)}{\min_{\mathbb{S}^n}u(x)} =\log\frac{u(x_1)}{u(x_2)}&=&\int_0^1\frac{d}{dt}\log\left(u(tx_1+(1-t)x_2)\right)dt \\
  &\leq&|x_1-x_2|\int_0^1\nabla\log u(tx_1+(1-t)x_2)dt\leq C,
\end{eqnarray*}
the proof is completed.
\end{proof}

Let $\overline{u}:=\frac{u}{\min_{\mathbb{S}^n}u}$, then $\overline{u}$ satisfies
$$\sigma_k(\nabla^2\overline{u}+\overline{u}I)=\overline{u}^{p-1+\varepsilon}(\overline{u}^2+|\nabla \overline{u}|^2)^{\frac{k+1-p}{2}}(\min_{\mathbb{S}^n}u)^{\varepsilon}\varphi(x), \quad on~ \mathbb{S}^{n}.$$
By Theorem \ref{c1}, we know that there exist positive constants $C$ and $C'$ depending on $n, k, \min_{\mathbb{S}^n} \varphi$ and $\|\varphi\|_{C^1}$, but independent of $\varepsilon$ such that
\begin{equation}\label{e9}
  1\leq\overline{u}\leq\frac{\max_{\mathbb{S}^n}u}{\min_{\mathbb{S}^n}u}\leq C,
\end{equation}
and
\begin{equation}\label{e10}
  |\nabla\overline{u}|=\frac{u}{\min_{\mathbb{S}^n}u}\frac{|\nabla u|}{u}\leq\frac{\max_{\mathbb{S}^n}u}{\min_{\mathbb{S}^n}u}\frac{|\nabla u|}{u}\leq C'.
\end{equation}
According to \eqref{e9}-\eqref{e10} and Theorem \ref{c2-main}, we have
$$|\nabla^2\overline{u}|\leq C'',$$
where $C''$ is a constant depending on $n, k, \min_{\mathbb{S}^n} \varphi$ and $\|\varphi\|_{C^1}$, but independent of $\varepsilon$.

\subsection{Existence and uniqueness}
We will divide into three steps to complete the proof of Theorem 1.4 for the case $p=q>1$.

\textbf{Step 1:} Existence: Since equation \eqref{e1} has a unique positive spherical convex solution $u_{\varepsilon}$ for any small constant $\varepsilon>0$. Denote $\overline{u}_{\varepsilon}=\frac{u_{\varepsilon}}{\min_{\mathbb{S}^n}u_{\varepsilon}}$, then $\overline{u}_{\varepsilon}$ satisfies
$$\sigma_k(\nabla^2\overline{u}_{\varepsilon}+\overline{u}_{\varepsilon}I)=\overline{u}_{\varepsilon}^{p-1+\varepsilon}(\overline{u}_{\varepsilon}^2+
|\nabla \overline{u}_{\varepsilon}|^2)^{\frac{k+1-p}{2}}(\min_{\mathbb{S}^n}u_{\varepsilon})^{\varepsilon}\varphi(x), \quad on~ \mathbb{S}^{n}.$$
Letting $\varepsilon\rightarrow 0^{+}$, we have $|\nabla(\overline{u}_{\varepsilon})^{\varepsilon}|=\varepsilon(\overline{u}_{\varepsilon})^{\varepsilon-1}|\nabla \overline{u}_{\varepsilon}|\rightarrow0$ by \eqref{e9}-\eqref{e10}. Then $(\min_{\mathbb{S}^n}\overline{u}_{\varepsilon})^{\varepsilon}$ converges to a positive constant $\gamma$. Thus $\overline{u}_{\varepsilon}$ converges to a positive spherical convex solution of equation \eqref{Keq}. Constant rank theorem (Theorem \ref{crt}) can maintain the convexity during the use of the continuity method.

\textbf{Step 2:} Uniqueness: Suppose there are two positive solutions $u$ and $\overline{u}$ such that
$$\sigma_k(u_{ij}+u\delta_{ij})=u^{p-1}(u^2+|\nabla u|^2)^{\frac{k+1-p}{2}}\gamma\varphi(x),$$
and
$$\sigma_k(\overline{u}_{ij}+\overline{u}\delta_{ij})=\overline{u}^{p-1}(\overline{u}^2+|\nabla \overline{u}|^2)^{\frac{k+1-p}{2}}\gamma\varphi(x).$$
Let $M(u):=\frac{\sigma_k(u_{ij}+u\delta_{ij})}{u^{p-1}(u^2+|\nabla u|^2)^{\frac{k+1-p}{2}}}$, then $M(u)-M(\overline{u})=\gamma\varphi-\gamma\varphi=0$. Since $M$ is invariant under scaling, we may assume $u\leq\overline{u}$ and $u(x_0)=\overline{u}(x_0)$ for some point $x_0\in \mathbb{S}^n$. Denote $u_t=tu+(1-t)\overline{u}$ for $0\leq t \leq 1$, then
\begin{eqnarray*}
  0=M(u)-M(\overline{u}) &=& \int_0^1\frac{d}{dt}M(u_t)dt\\
  &=&\sum_{i,j}a_{ij}(x)(u-\overline{u})_{ij}+\sum_ib_i(x)(u-\overline{u})_i+c(x)(u-\overline{u}),
\end{eqnarray*}
where
$$a_{ij}=\int_0^1u_t^{1-p}(u_t^2+|\nabla u_t|^2)^{-\frac{k+1-p}{2}}\frac{\partial\sigma_k}{\partial(W_t)_{ij}}dt>0,$$
$$b_i=-(k+1-p)\int_0^1u_t^{1-p}(u_t^2+|\nabla u_t|^2)^{-\frac{k+1-p}{2}-1}(tu_i+(1-t)\overline{u}_i)\sigma_k(W_t)dt,$$
$$c=\int_0^1u_t^{-p}(u_t^2+|\nabla u_t|^2)^{-\frac{k+1-p}{2}}\left((1-p)\sigma_k(W_t)-\frac{(k+1-p)u_t^2}{u_t^2+|\nabla u_t|^2}\sigma_k(W_t)+u_t\sum_i\frac{\partial\sigma_k}{\partial(W_t)_{ii}}\right)dt,$$
and
$$W_t=t(\nabla^2u+uI)+(1-t)(\nabla^2\overline{u}+\overline{u}I).$$
Therefore by the maximum principle, we have $u-\overline{u}\equiv0$ on $\mathbb{S}^n$.

\textbf{Step 3:} Uniqueness of the constant $\gamma$:
Assume that there exist two positive constants $\gamma, \overline{\gamma}$ and two solutions $u, \overline{u}$ such that
$$\sigma_k(u_{ij}+u\delta_{ij})=u^{p-1}(u^2+|\nabla u|^2)^{\frac{k+1-p}{2}}\gamma\varphi(x),$$
and
$$\sigma_k(\overline{u}_{ij}+\overline{u}\delta_{ij})=\overline{u}^{p-1}(\overline{u}^2+|\nabla \overline{u}|^2)^{\frac{k+1-p}{2}}\overline{\gamma}\varphi(x).$$
Suppose $G=\frac{u}{\overline{u}}$ attains its maximum at $x_0\in\mathbb{S}^n$. Then at $x_0$,
$$0=\nabla\log G=\frac{\nabla u}{u}-\frac{\nabla\overline{u}}{\overline{u}},$$
and
\begin{eqnarray*}
  0 &\geq&\nabla^2\log G \\
  &=&\frac{\nabla^2u}{u}-\frac{(\nabla u)^2}{u^2}-\frac{\nabla^2\overline{u}}{\overline{u}}+\frac{(\nabla \overline{u})^2}{\overline{u}^2}\\
  &=&\frac{\nabla^2u}{u}-\frac{\nabla^2\overline{u}}{\overline{u}},
\end{eqnarray*}
which implies $\sigma_k(u^{-1}(\nabla^2u+uI))\leq\sigma_k(\overline{u}^{-1}(\nabla^2\overline{u}+\overline{u}I))$. Thus at $x_0$ we derive
\begin{eqnarray*}
  \frac{\gamma}{\overline{\gamma}}=\frac{\gamma\varphi(x_0)}{\overline{\gamma}\varphi(x_0)}&=&\frac{(\overline{u}^2+|\nabla\overline{u}|^2)^
  {\frac{k+1-p}{2}}u^{1-p}\sigma_k(\nabla^2u+uI)}{(u^2+|\nabla u|^2)^
  {\frac{k+1-p}{2}}\overline{u}^{1-p}\sigma_k(\nabla^2\overline{u}+\overline{u}I)} \\
  &=&\frac{\sigma_k(u^{-1}(\nabla^2u+uI))}{\sigma_k(\overline{u}^{-1}(\nabla^2\overline{u}+\overline{u}I))}\leq 1.
\end{eqnarray*}
Similarly, we can also get $\gamma\geq\overline{\gamma}$ at the minimum point of $G$, hence $\gamma\equiv\overline{\gamma}$.

\textbf{Conflict of interest statement:}
On behalf of all authors, the corresponding author states that there is no conflict of interest.

\textbf{Data availability statement:}
No datasets were generated or analysed during the current study.


\end{document}